\title{Shape optimization of peristaltic pumps transporting rigid particles in Stokes flow
%\thanks{%Draft version, \today.
%\funding{We acknowledge support from NSF under
%grants DMS-1719834 and DMS-1454010. }}
}
\author{Marc Bonnet\thanks{POEMS (CNRS, INRIA, ENSTA), ENSTA Paris, 91120 Palaiseau, France
  (\email{mbonnet@ensta.fr}).}
\and Ruowen Liu\thanks{Department of Mathematics, University of Michigan, Ann Arbor, United States
  (\email{ruowen@umich.edu}, \email{shravan@umich.edu}, \email{hszhu@umich.edu}).}
\and Shravan Veerapaneni\footnotemark[2]
\and Hai Zhu\footnotemark[2]}
\newlength{\kaka}
\newcommand{\Asub}{_{\text{\scriptsize A}}}
\newcommand{\bfa}{\boldsymbol{a}}
\newcommand{\bfxh} {\hat{\bfx}}
\newcommand{\bfc} {\boldsymbol{c}}
\newcommand{\bfd} {\boldsymbol{d}}
\newcommand{\bfD} {\boldsymbol{D}}
\newcommand{\bfe} {\boldsymbol{e}}
\newcommand{\bfE} {\boldsymbol{E}}
\newcommand{\bff} {\boldsymbol{f}}
\newcommand{\bffh} {\hat{\boldsymbol{f}}{}}
\newcommand{\bfg} {\boldsymbol{g}}
\newcommand{\bfh} {\boldsymbol{h}}
\newcommand{\bfhh} {\hat{\boldsymbol{h}}{}}
\newcommand{\bfI} {\boldsymbol{I}}
\newcommand{\bfk} {\boldsymbol{k}}
\newcommand{\bfn} {\boldsymbol{n}}
\newcommand{\bfna}{\boldsymbol{\nabla}}
\newcommand{\bfR} {\boldsymbol{R}}
\newcommand{\bfr}{\boldsymbol{r}}
\newcommand{\bfrh}{\boldsymbol{\rho}}
\newcommand{\bfrhh}{\hat{\bfrh}}
\newcommand{\bfsig}{\boldsymbol{\sigma}}
\newcommand{\bftau}{\boldsymbol{\tau}}
\newcommand{\bfth}{\boldsymbol{\theta}}
\newcommand{\bfu} {\boldsymbol{u}}
\newcommand{\bfU} {\boldsymbol{U}}
\newcommand{\bfuD} {\bfu\Dsup}
\newcommand{\bfuh} {\hat{\boldsymbol{u}}{}}
\newcommand{\bfv} {\boldsymbol{v}}
\newcommand{\bfw} {\boldsymbol{w}}
\newcommand{\bfps} {\boldsymbol{\psi}}
\newcommand{\bfdel} {\boldsymbol{\delta}}
\newcommand{\bfxr} {\mathbf{x}}
\newcommand{\bfx} {\boldsymbol{x}}
\newcommand{\bfy} {\boldsymbol{y}}
\newcommand{\bfxi}{\boldsymbol{\xi}}
\newcommand{\bfz} {\boldsymbol{z}}
\newcommand{\bfze}{\mathbf{0}}
\newcommand{\bfzet}{\boldsymbol{\zeta}}
\newcommand{\per}{_{\text{\scriptsize per}}}
\newcommand{\dd}[1]{\protect\settowidth{\kaka}{\text{$#1$}}\protect\makebox[\kaka]{\ensuremath{\buildrel{\scriptstyle\star}\over{#1}}}}
\newcommand{\bdot}[1]{\protect\settowidth{\kaka}{\text{$#1$}}\protect\makebox[\kaka]{\ensuremath{\buildrel{\Bdot}\over{#1}}}}
\newcommand{\Bdot}{\scalebox{0.5}{$\;\bullet$}}
\newcommand{\Dcal}{\mathcal{D}}
\newcommand{\del}[1]{\partial_{#1}}
\newcommand{\der}[2]{\dfrac{\text{d}#1}{\text{d}#2}}
\newcommand{\dip} {\! :\!}
\newcommand{\Div}{\mbox{div}\,}
\newcommand{\DivS} {\mbox{div}_{\! S}}
\newcommand{\dO}{\partial\Omega}
\newcommand{\ds}{\,\text{d}s}
\newcommand{\dt}{\,\text{d}t}
\newcommand{\Dsub}{_{\text{\scriptsize D}}}
\newcommand{\Dsup}{^{\text{\tiny D}}}
\newcommand{\dV}{\;\text{d}V}
\newcommand{\fhat}{\hat{f}}
\newcommand{\FS}{\ensuremath{\mbox{\boldmath $\mathcal{F}$}}}
\newcommand{\HS}{\ensuremath{\mbox{\boldmath $\mathcal{H}$}}}
\newcommand{\G}{\Gamma}
\newcommand{\GL}{\Gamma_L}
\newcommand{\Gp}{\G_{p}}
\newcommand{\Gpm}{\G_L}
\newcommand{\Gpp}{\G_0}
\newcommand{\Gsup}{^{\text{\tiny G}}}
\newcommand{\hath}{\hat{h}}
\newcommand{\hatp}{\hat{p}}
\newcommand{\hatr}{\hat{r}}
\newcommand{\ioT}{\int_{\oo(T)}}
\newcommand{\idO}{\int_{\dO}}
\newcommand{\ig}{\int_{\!\gamma}}
\newcommand{\igt}{\int_{\!\gamma(t)}}
\newcommand{\iG} {\int_{\G}}
\newcommand{\iGL} {\int_{\GL}}
\newcommand{\inv}[1]{\dfrac{1}{#1}}
\newcommand{\pinv}[1]{\tfrac{1}{#1}}
\newcommand{\iO}{\int_{\OO}}
\newcommand{\iOt}{\int_{\OO(t)}}
\newcommand{\iS}{\int_{S}}
\newcommand{\iT}{\int_{0}^T}
\newcommand{\lbra}{\big\langle\hspace*{0.1em}}
\newcommand{\Lcal}{\mathcal{L}}
\newcommand{\lcb}{\big\{\hspace*{0.1em}}
\newcommand{\Lcb}{\Big\{\hspace*{0.1em}}
\newcommand{\lpar}{\big(\hspace*{0.1em}}
\newcommand{\Lpar}{\Big(\,}
\newcommand{\lsqb}{\big[\hspace*{0.1em}}
\newcommand{\Lsqb}{\Big[\hspace*{0.1em}}
\newcommand{\Ocal}{\mathcal{O}}
\newcommand{\OO}{\Omega}
\newcommand{\oo}{\omega}
\newcommand{\ooh}{\hat{\varrho}}
\newcommand{\Pcal}{\mathcal{P}}
\newcommand{\Wsub}{_{\text{\tiny W}}}
\newcommand{\Rabs}{\hspace*{0.1em}\Big|}
\newcommand{\Rbb} {\mathbb{R}}
\newcommand{\rbra}{\hspace*{0.1em}\big\rangle}
\newcommand{\rcb}{\hspace*{0.1em}\big\}}
\newcommand{\Rcb}{\hspace*{0.1em}\Big\}}
\newcommand{\rpar}{\hspace*{0.1em}\big)}
\newcommand{\rmx}{\mathrm{x}}
\newcommand{\Rpar}{\,\Big)}
\newcommand{\RS}{\text{\boldmath $\mathcal{R}$}}
\newcommand{\rsqb}{\hspace*{0.1em}\big]}
\newcommand{\Rsqb}{\hspace*{0.1em}\Big]}
\newcommand{\Scal}{\mathcal{S}}
\newcommand{\shcap}{\hspace*{-0.1em}\cap\hspace*{-0.1em}}
\newcommand{\shcup}{\hspace*{-0.1em}\cup\hspace*{-0.1em}}
\newcommand{\shdeq}{\hspace*{-0.1em}:=\hspace*{-0.1em}}
\newcommand{\sheq}{\hspace*{-0.1em}=\hspace*{-0.1em}}
\newcommand{\shg}{\hspace*{-0.1em}>\hspace*{-0.1em}}
\newcommand{\shin}{\hspace*{-0.1em}\in\hspace*{-0.1em}}
\newcommand{\shl}{\hspace*{-0.1em}<\hspace*{-0.1em}}
\newcommand{\shleq}{\hspace*{-0.1em}\leq\hspace*{-0.1em}}
\newcommand{\shm}{\hspace*{-0.1em}-\hspace*{-0.1em}}
\newcommand{\shp}{\hspace*{-0.1em}+\hspace*{-0.1em}}
\newcommand{\shsubs}{\hspace*{-0.1em}\subset\hspace*{-0.1em}}
\newcommand{\shtimes}{\hspace*{-0.1em}\times\hspace*{-0.1em}}
\newcommand{\sip} {\! \cdot\!}
\newcommand{\Ssub}{_{\text{\scriptsize S}}}
\newcommand{\suite}[1][0ex]{\notag \\[#1] & \mbox{}\hspace{15pt}}
\newcommand{\Tcal}{\mathcal{T}}
\newcommand{\tdemi} {\tfrac{1}{2}}
\newcommand{\tensor}{\hspace*{-1pt}\otimes\hspace*{-1pt}}
\newcommand{\Tsup}{^{\text{\scriptsize T}}}
\newcommand{\US}{\text{\boldmath $\mathcal{U}$}}
\newcommand{\Vsub}{_{\text{\scriptsize V}}}
\DeclareMathOperator*{\argmin}{arg\;min}
\begin{document}

\maketitle

% REQUIRED
\begin{abstract}
This paper presents a computational approach for finding the optimal shapes of peristaltic pumps transporting rigid particles in Stokes flow. In particular, we consider shapes that minimize the rate of energy dissipation while pumping a prescribed volume of fluid, number of particles and/or distance traversed by the particles over a set time period. Our approach relies on a recently developed fast and accurate boundary integral solver for simulating multiphase flows through periodic geometries of arbitrary shapes. In order to fully capitalize on the dimensionality reduction feature of the boundary integral methods, shape sensitivities must ideally involve evaluating the physical variables on the particle or pump boundaries only. We show that this can indeed be  accomplished owing to the linearity of Stokes flow. The forward problem solves for the particle motion in a slip-driven pipe flow while the adjoint problems in our construction solve quasi-static Dirichlet boundary value problems backwards in time, retracing the particle evolution. The shape sensitivies simply depend on the solution of one forward and one adjoint (for each shape functional) problems. We validate these analytic shape derivative formulas by comparing against finite-difference based gradients and present several examples showcasing optimal pump shapes under various constraints.    
\end{abstract}

% REQUIRED
\begin{keywords}
Shape sensitivity analysis, integral equations, fast algorithms, particulate flows
\end{keywords}

% REQUIRED
\begin{AMS}
  49M41, 76D07, 65N38
\end{AMS}

\section{Introduction}

Transporting rigid and deformable particles suspended in a viscous fluid with precise control is a challenging but crucial task in microfluidics \cite{kirby2010micro}. A classical engineering approach---one that is commonly found in biological systems (e.g., see \cite{jaffrin1971peristaltic, fauci2006biofluidmechanics, bornhorst2017gastric})---is the use of periodic contraction waves of the enclosing tube to drive the particulate flows. This mechanism is known as peristalsis. Computationally, the forward problem of simulating the particle transport for a given peristaltic wave shape has been considered in a number of works; a few recent ones that consider various physical scenarios include \cite{takagi2011peristaltic, chrispell2011peristaltic, aranda2015model, marconati2017transient, poursharifi2018peristaltic}. However, the inverse problem of finding the optimal wave shapes (e.g., that minimize the pump’s power loss) received little attention, primarily owing to the computational challenges associated with its solution---every shape iteration requires time-dependent solution of a rigid (or deformable) particle motion through constrained geometries in Stokes flow. In this work, we formulate an adjoint-based optimization approach that overcomes several of the associated computational bottlenecks.\enlargethispage*{2ex}

In \cite{bonnet2020shape}, we considered the shape optimization of peristaltic pumps transporting a simple Newtonian fluid at low Reynolds numbers, which in turn was inspired by the work of Walker and Shelley \cite{walk:shel:10}. In contrast, the present work considers the case of pumps transporting large solid particles suspended in the viscous fluid (schematic in Figure \ref{geom:2D}). This extension, however, is non-trivial, since a dynamic fluid-structure interaction problem needs to be solved to simulate the transport for a given peristaltic wave shape.  

The main contributions of this work are two-fold. First, to evaluate the shape sensitivities efficiently, we systematically derive adjoint formulations for all the required shape functionals.  
The new shape derivative formulas require evaluating physical and adjoint variables on the domain boundaries only, consistently with the general structure of shape derivative formulas~\cite{henrot:pierre:18}. Adjoint formulations are very widely used for the evaluation of shape or material sensitivities in PDE-constrained optimization~\cite{hinze:08}, even in situations involving time-dependent forward problems, as here. They have recently been applied to droplet shape control in~\cite{fikl:21} and are also commonplace in applications such as geophysical full waveform inverse 
problems~\cite{epanomeritakis:08,metivier:12}. Our proposed shape sensitivity formulas allow, for each shape functional involved in the present optimization problem, to evaluate its derivatives with respect to any chosen set of shape parameters by using a single time-backwards adjoint solution. While this characteristic is relatively classical nowadays, we faced and solved a significant and less-common additional difficulty, namely that the fluid carries particles whose motion depends on the shape being optimized in an \emph{a priori} unknown way and gives rise to design-dependent time-evolving shape parameters. Our adjoint problems are designed so that the contribution of the latter is accounted for, circumventing the need of evaluating explicitly the shape sensitivity of the motion of carried particles.

Second, as in \cite{bonnet2020shape}, we employ boundary integral equation (BIE) techniques to solve the governing equations. Their usual advantages over classical domain discretization methods---reduction in dimensionality, high-order accuracy and availability of fast solvers---are particularly significant for the shape optimization considered here as they avoid the need for volume re-meshing between optimization iterations and across the time steps. Specifically, we adapt the BIE method developed in \cite{marple:16} to solve our forward and the associated adjoint problems. In contrast to classical BIE techniques that employ periodic Green’s functions, it uses free-space Green's functions together with a set of auxiliary sources and enforces the periodic boundary conditions algebraically. The forward and adjoint problems require enforcing a variety of boundary conditions on the channel and particle boundaries as well as jump conditions across the channel---all of which can be accomodated in a straightforward manner using this BIE formulation.

% Paper organization
This paper is organized as follows. In Section \ref{sc:formulation}, we introduce the PDE formulation of the peristaltic pumping problem and formally define the shape optimization problem.  
The shape sensitivities of the objective function and the constraints are derived in Section \ref{sc:sensitivities}.
The boundary integral method for solving the forward and adjoint problems and the numerical optimization procedure are discussed in Section \ref{sc:numerical}. 
Validation tests and the optimal shapes under various constraints are presented in Section \ref{sc:results}, followed by conclusions in Section \ref{sc:conclusions}.

\section{Problem formulation} \label{sc:formulation}

\subsection{Formulation of the wall motion}
\label{geometry}

Pumping is achieved by the channel wall shape moving along the positive direction $\bfe_1$ at a constant velocity $c$, as a traveling wave of wavelength $L$ (the wave period therefore being $T\per\shdeq L/c$). The quantities $L,c$ are considered as fixed in the wall shape optimization process. This apparent shape motion is achieved by a suitable material motion of the wall, whose material is assumed to be flexible but inextensible. Like in~\cite{walk:shel:10}, it is convenient to introduce a \emph{wave frame} that moves along with the traveling wave, i.e., with velocity $c\bfe_1$ relative to the (fixed) lab frame.

\begin{figure}[t] \centering
  \includegraphics[width=0.9\textwidth]{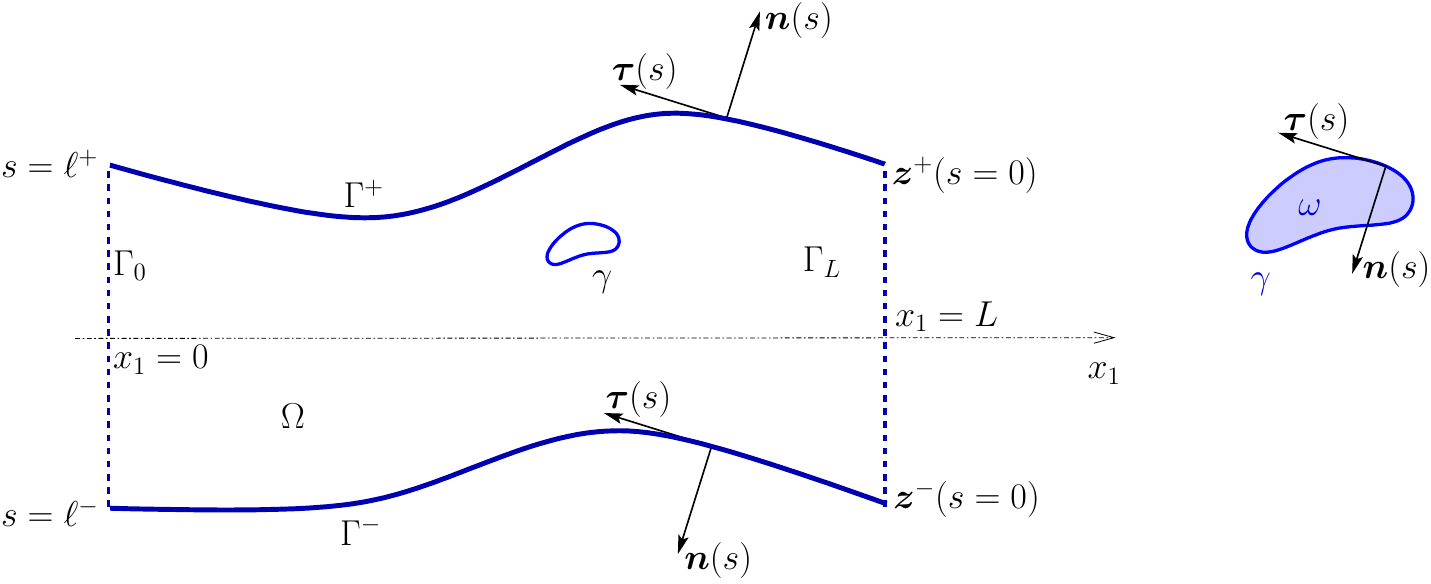}
  \caption{2D periodic channel with particle-carrying flow in wave frame: geometry and notation}\label{geom:2D}
\end{figure}

Here we consider fluid flows carrying rigid particles, treating in detail the case of one such particle. The particle motion makes the flow, and the fluid domain, time-dependent, and we denote by $t\shin[0,T]$ the time interval of interest, the duration $T$ being arbitrary. Let $\Omega(t)$ denote, in the wave frame, the fluid region enclosed in one wavelength of the channel (see Fig.~\ref{geom:2D}), and let $\oo(t)$ and $\gamma(t)$ be the domain occupied at time $t$ in the wave frame by the particle and its closed contour. The fluid domain boundary is $\dO(t)=\G\shcup\G_p\shcup\gamma(t)$. The wall $\G\shdeq\G^{+}\shcup\G^{-}$, which is fixed in this frame, has disconnected components $\G^{\pm}$ which are not required to achieve symmetry with respect to the $x_1$ axis and have respective lengths $\ell^{\pm}$. The remaining channel contour $\G_p\shdeq\G_0\shcup\G_L$ consists of the periodic planar end-sections $\G_0$ and $\G_L$, respectively situated at $x_1=0$ and $x_1=L$; the endpoints of $\G_L$ are denoted by $\bfz^{\pm}$ (Fig.~\ref{geom:2D}). The orientation conventions of Fig.~\ref{geom:2D} are used throughout.

The fluid flow at any given time is assumed to be spatially periodic in the channel axis direction. This implies a periodic arrangement of the carried particle(s); for instance, the single particle considered in what follows is implicitly replicated in each periodic segment of the channel.

\subsubsection*{Wall geometry and motion} Both channel walls are described as arcs $s\mapsto \bfx^{\pm}(s)$ with the arclength coordinate $s$ directed ``leftwards'' as depicted in~Fig.~\ref{geom:2D}, whereas the unit normal $\bfn$ to $\dO$ is everywhere taken as outwards to $\OO$. The position vector $\bfx(s)$, unit tangent $\bftau(s)$, unit normal $\bfn(s)$ and curvature $\kappa(s)$ obey the Frenet formulas
\begin{equation}
  \del{s}\bfx = \bftau, \qquad \del{s}\bftau = \kappa\bfn, \qquad
  \del{s}\bfn = -\kappa\bftau \qquad \text{on $\G^+$ and $\G^-$}. \label{frenet}
\end{equation}
For consistency with our choice of orientation convention (and with the above formulas), the curvature is everywhere on $\G$ taken as $\kappa\sheq\bfn\sip\del{s}\bftau$.\enlargethispage*{1ex}

In the wall frame, the wall particle velocity must be tangent to $\G$ (wall material points being constrained to remain on the surface $\G$); moreover the wall material is assumed to be inextensible. In the wave frame, the wall particle velocities $\bfU$ satisfying both requirements must have, on each wall, the form
\begin{equation}
  \bfU(s) = U\bftau(s),
\end{equation}
where $U$ is constant. Moreover, in the wave frame, all wall material points travel over an entire spatial period during the time interval $T\per=1$, which implies $U=\ell$ ($\ell$ being the ratio between wall and channel lengths due to scaling). Finally, the viscous fluid must obey a no-slip condition on the wall, so that the velocity of fluid particles adjacent to $\bfx(s)$ is $\bfU(s)$. Concluding, the pumping motion of the wall constrains on each wall the fluid motion through
\begin{equation}
  \bfu(\bfx) = \bfuD(\bfx) := \ell^{\pm}\bftau^{\pm}(\bfx) ,\quad \bfx\in\G^{\pm}. \label{noslip:2D}
\end{equation}
In the sequel, we will drop plus, minus and $\pm$ symbols referring to upper and lower channel walls, with the understanding that quantities attached to $\G$ (e.g. $\ell$) may take distinct values on either wall.

\subsubsection*{Rigid particle motion} The motion of material points $\bfx$ of a rigid particle $\oo$, or its contour $\gamma$, has the Lagrangian representation
\begin{equation}
  \oo(t) = \lcb \bfxr(\bfx_0,t), \quad \bfx_0\shin\oo_0 \rcb, \qquad \text{with}\quad
  \bfxr(\bfx_0,t) := \bfc(t) + \bfR(t)\sip\bfx_0
%  \oo(t)\ni\bfx^{\eta}(\bfx_0,t) = \bfc(t,\eta) + \bfR(t,\eta)\sip\bfx_0, \qquad \bfx_0\shin\oo(0),
\label{rigid:motion}
\end{equation}
where $\bfx_0$ is the position of the material point at initial time $t\sheq0$ and $\oo_0\shdeq\oo(0)$ the initial configuration of the particle, while the time-dependent vector $\bfc(t)$ (with $\bfc(0)\sheq\bfze$) and the time-dependent unitary matrix $\bfR(t)\shin\text{SO}(2)$ (with $\bfR(0)\sheq\bfI$) respectively describe the particle translation and rotation relative to the initial particle configuration $\oo_0$. The corresponding particle velocity $\bdot{\bfx}$ is, in Eulerian form:
\begin{equation}
  \bdot{\bfx}(t)
%  = \bfw(\bfx,t)
 = \bfw(t) + \varrho(t)\,\bfe_3\shtimes\bfx = \bfw(t) + \varrho(t)\,\bfr\sip\bfx, \qquad \bfx\shin\oo(t), \label{rigid:velocity}
\end{equation}
with the constant skew-symmetric tensor $\bfr$ defined by $\bfr\shdeq\bfe_2\tensor\bfe_1-\bfe_1\tensor\bfe_2$ and the angular velocity $\varrho$ and translational velocity $\bfw$ linked at any time $t$ to $\bfR,\bfc$ through $\varrho \bfr = \bdot{\bfR}\sip\bfR\Tsup$ and $\bfw = \bdot{\bfc}-\varrho\bfe_3\shtimes\bfc$.

\subsection{Forward problem: PDE formulation}\label{forward:pde}

The fluid is assumed to be viscous (with dynamic viscosity $\mu$) and incompressible, so that the stress tensor is given by
\begin{equation}
  \bfsig[\bfu,p] = -p\bfI + 2\mu\bfD[\bfu] \label{constitutive}
\end{equation}
where $\bfD[\bfu]:=\tdemi(\bfna\bfu\shp\bfna\Tsup\bfu)$ is the strain rate tensor and $p$ is the pressure. We henceforth use the parameters $L,c,\mu$ to define non-dimensional of all relevant variables: coordinates and lengths are scaled by $L$, velocities by $c$, angular velocities by $c/L$, time by $L/c$, and stresses (including traction vectors and pressures) by $\mu c/L$. All geometrical or physical variables appearing thereafter are implicitly non-dimensional, after scaling according to the foregoing conventions.

The particle-carrying flow in the wave frame~\cite{walk:shel:10} during a time interval $t\shin[0,T]$ is described at any time instant by the incompressible Stokes equations with periodicity conditions
\begin{subequations}
\begin{equation}
  -\Delta\bfu + \bfna p = \bfze, \quad \Div\bfu= 0 \quad \text{in $\OO(t)$}, \qquad \bfu|_{\G_L} = \bfu|_{\G_0}. \label{forward:PDE}
\end{equation}
The fluid motion results from the prescribed wall velocity
\begin{equation}
  \bfu = \bfuD := \ell\bftau \qquad \text{on $\G$}. \label{forward:PDE:wall}
\end{equation}
The rigid particle in turn undergoes a rigid-body motion of the form~\eqref{rigid:motion} due to being carried by the fluid through the no-slip condition at any time:
\begin{equation}
  \bfu = \bdot{\bfx}, \quad  \qquad \text{on $\gamma(t)$} \label{forward:PDE:particle}
\end{equation}
The fluid region $\OO(t)$ and the flow solution are time-dependent due to the particle motion $t\mapsto \oo(t)$. Equations~(\ref{forward:PDE}-c) define a well-posed problem for $(\bfu,p)$ at any time $t$ if the particle motion (and hence $\bdot{\bfx}$ on $\gamma(t)$) is known, $p$ being determined up to an arbitrary (and irrelevant) additive constant.\enlargethispage*{7ex}

The particle motion being, in fact, unknown, it is determined from the condition that the hydrodynamic forces exerted on $\gamma(t)$ have a zero net force and net torque, i.e.:
\begin{equation}
  \igt \bfsig[\bfu,p]\sip\bfn \ds = \bfze, \qquad
  \igt \lpar\bfsig[\bfu,p]\sip\bfn\rpar\sip(\bfe_3\shtimes\bfx) \ds = 0 \qquad t\in[0,T]. \label{equilibrium}
\end{equation}
Conditions~\eqref{equilibrium} allow to determine the three DOFs $\bfw(t)$ and $\varrho(t)$ of the particle velocity $\bdot{\bfx}$, see~\eqref{rigid:velocity}. The particle motion is then found by integrating $\bdot{\bfx}$ in time from a given initial condition
\begin{equation}
  \oo(0) = \oo_0. \label{gamma0}
\end{equation}
\end{subequations}
The solution of the forward evolution problem~\eqref{forward:PDE}--\eqref{gamma0}, and in particular the particle motion, is entirely determined by the shape of the wall $\G$, since the data $\bfuD$ given by~\eqref{noslip:2D} is. In a time-discrete explicit setting with time step $\Delta t\sheq T/N$ and time instants $t_0\sheq0, t_1\sheq\Delta t,\ldots,t_N\sheq T$, equations~(\ref{forward:PDE}-d) are solved at each $t\sheq t_n$ and the particle configuration $\gamma_n\shdeq\gamma(t_n)$ is updated in explicit fashion through
\begin{equation}
  \gamma_{n+1} = \gamma_n + \bdot{\bfx}(\gamma_n,t_n)\Delta t \label{gamma:update}
\end{equation}

\subsection{Forward problem: weak formulation}\label{sec:forward:weak}

In this work, flow computations rely on a boundary integral equation (BIE) formulation of equations~(\ref{forward:PDE}-d), see Sec.~\ref{sec:forwardsolver}. It is however convenient, for the derivation of shape derivative identities and adjoint problems, to recast equations~\eqref{forward:PDE}--\eqref{equilibrium} of the forward evolution problem in the following mixed weak form (e.g. \cite{brez:fort:91}, Chap. 6):
\begin{equation} \label{forward:weak}
\begin{multlined}
\text{For each $t\shin[0,T]$, find }(\bfu,p,\bff,\bfh,\bdot{\bfx}) \in \US\shtimes\Pcal\shtimes\FS\shtimes\HS\shtimes\RS, \\
\left\{
\begin{aligned}
  \text{(a) }&& a(\bfu,\bfv) - b(\bfu,q) - b(\bfv,p) - \lbra\bff,\bfv\rbra_{\G} - \lbra\bfh,\bfv\rbra_{\!\gamma(t)} &= 0
  &\quad& \forall(\bfv,q)\shin\US\shtimes\Pcal\hspace*{-1em} \\
%  \text{(b) }&& b(\bfu,q) &= 0 && \forall q\shin\Pcal \\
  \text{(b) }&& \lbra\bfg,\bfuD\rbra_{\G} - \lbra\bfg,\bfu\rbra_{\G} &= 0 && \forall\bfg\shin\FS \\
  \text{(c) }&& \lbra\bfk,\bdot{\bfx}\rbra_{\!\gamma(t)} - \lbra\bfk,\bfu\rbra_{\!\gamma(t)} &= 0 && \forall\bfk\shin\HS \\
  \text{(d) }&& \lbra \bfh, \bfrh \rbra_{\!\gamma(t)} &= 0 && \forall\bfrh\shin\RS
\end{aligned} \right.\hspace*{-0.5em}
\end{multlined}
\end{equation}
where $\lbra\cdot,\cdot\rbra_{X}$ stands for the $L^2(X)$ duality product, and the bilinear forms $a$ and $b$ are defined by
\begin{equation}
  a(\bfu,\bfv) = \iO 2\bfD[\bfu]\dip\bfD[\bfv] \dV, \qquad b(\bfv,q) = \iO q\,\Div\bfv \dV
\end{equation}
The function spaces in equations~\eqref{forward:weak} are as follows: $\US$ is the space of all periodic vector fields contained in $H^1(\OO;\Rbb^2)$, $\Pcal$ is the space of all $L^2(\OO)$ functions with zero mean (i.e. obeying the constraint $\lbra p,1 \rbra_{\OO}\sheq0$), $\FS\sheq H^{-1/2}(\G;\Rbb^2)$ and $\HS\sheq H^{-1/2}(\gamma;\Rbb^2)$. The dependence on time of $\US,\Pcal,\FS$ (through the time-dependent regions $\OO(t)$ and $\oo(t)$) is implicitly understood. The chosen definition of $\Pcal$ caters for the fact that $p$ would otherwise be defined only up to an arbitrary additive constant. The Dirichlet boundary conditions~\eqref{forward:PDE:wall} and~\eqref{forward:PDE:particle} are (weakly) enforced through~(\ref{forward:weak}b,c), rather than being embedded in the velocity space $\US$, as this will make the derivation of shape derivative identities simpler. The unknown $\bff$, which acts as the Lagrange multiplier associated with condition~\eqref{forward:PDE:wall}, is in fact the force density (i.e stress vector) $\bfsig[\bfu,p]\sip\bfn$ on $\G$; likewise, $\bfh\shdeq\bfsig[\bfu,p]\sip\bfn$ is the stress vector arising on $\gamma$ from the kinematic condition~\eqref{forward:PDE:particle}. Condition~\eqref{equilibrium} is then the weak form of condition~\eqref{equilibrium}, $\RS$ being the three-dimensional space of rigid-body velocity fields
\begin{equation}
  \RS:=\lcb \bfrh=\bfrhh+\hatr\bfe_3\shtimes\bfx\,,\; (\bfrhh,\hatr)\shin\Rbb^2\shtimes\Rbb \rcb. \label{rho:def}
\end{equation}
Equations~\eqref{forward:weak} govern the flow at each instant $t$, knowing the current particle position $\oo(t)$. The complete forward evolution problem in weak form consists of~\eqref{forward:weak} supplemented with the initial condition~\eqref{gamma0}, with the particle motion $\oo(t)$ again to be found by integrating $\bdot{\bfx}$ in time.

\subsection{Objective functionals and optimization problem}\label{optim:pb}

We seek channel wall shapes that optimize the efficiency of peristaltic pumping. This problem involves three main quantities, namely the dissipation, the net particle motion and the mass flow rate, which we first describe.

\subsubsection*{Dissipation} The dissipation over a chosen duration $T$ is given~\cite{walk:shel:10} by the functional
\begin{equation}\label{J:loss:def}
%\begin{multlined}
  J\Wsub(\G)
 := \iT \Lcb \lbra \bff,(\bfuD\shp \bfe_1) \rbra_{\G} + \lbra \bfh,(\bfu\shp \bfe_1) \rbra_{\!\gamma(t)} \Rcb \dt
 = \iT \lbra \bff,(\bfuD\shp \bfe_1) \rbra_{\G} \dt
% \text{with \ }
%   I\Wsub(\G,t) := \lbra \bff,(\bfuD\shp c\bfe_1) \rbra_{\G}
%    + \lbra \bfh,(\bfu\shp c\bfe_1) \rbra_{\!\gamma(t)}
%   = \lbra \bff,(\bfuD\shp c\bfe_1) \rbra_{\G},
% \end{multlined}
\end{equation}
(up to the scaling factor $\mu cL$) where $(\bff,\bfh,\bfu)$ are components of the forward solution at time $t$ and the last equality stems from~\eqref{equilibrium}. Its value being completely determined by the shape of the wall $\G$ (in a partly implicit way through $\bff$ and the $\G$-dependent particle evolution $\gamma(t)$), $J\Wsub$ is a shape functional. \enlargethispage*{1ex}

\subsubsection*{Net particle motion} The net motion $D(\G)\shdeq \rmx_1\Gsup(T)\shm\rmx_1\Gsup(0)$ along $\bfe_1$ of the particle centroid $\bfxr\Gsup(t)$ in the wave frame is given by
\begin{equation}
  |\oo|D(\G) = \lbra x_1,1 \rbra_{\!\omega(T)} - \lbra x_1,1 \rbra_{\!\omega_0} \label{xG:def}
% = \int_0^T \lbra \bdot{x}_1,1 \rbra_{\!\omega(t)} \dt. 
\end{equation}

\subsubsection*{Optimization problem}

Consider a given particle initial domain $\omega_0$ and a chosen duration $T$, the goal is to find the optimal wall shape $\G$ of the peristaltic pumping channel that minimizes the dissipation functional $J\Wsub$ subject to the volume $|\OO|$ of the fluid region being constant and the net particle motion $D$ (in the wave frame) being given. In the fixed frame, the net particle motion is $D\shp T$ and the corresponding net particle velocity is $D/T\shp1$. The constrained optimization problem is then:
\begin{equation}
 \G^{\star} =  \argmin_{\OO(\G) \, \shin\, \Ocal} \; J\Wsub(\G) \quad
 \quad\text{subject to } \quad  \left\{\begin{aligned} C\Vsub(\G) &:= |\OO(\G)| \shm V_0 &=0, \\
 C\Dsub(\G) &:= D(\G) \shm D_0 &=0, \end{aligned}\right.
\label{eq:opt}\end{equation}
where $\Ocal$ is the set of admissible shapes of $\OO$ (see Sec.~\ref{overview}) and $V_0,D_0$ are given target values.

\subsubsection*{Mass flow rate} Another quantity frequently involved in the optimization of flows in channels is the average mass flow rate per wavelength $Q(\G)$, defined in the wave frame by
\begin{equation}
  Q(\G) = \inv{T}\int_0^T \iOt (\bfu\shp \bfe_1)\sip\bfe_1 \dV\dt
  = |\OO| + \inv{T}\int_0^T \lbra u_1,1 \rbra_{\OO(t)} \dt, \label{flowrate}
\end{equation}
(up to the scaling factor $cL$ and with $u_1\sheq \bfu\sip\bfe_1$).
We next observe that $\bfu$ is a rigid-body velocity on $\gamma(t)$, and can thus be continuously extended inside $\oo(t)$ as the particle rigid-body velocity field $\bdot{\bfx}$, so that
\begin{equation}
  \lbra u_1,1 \rbra_{\OO(t)} = \lbra u_1,1 \rbra_{\OO(t)\cup\oo(t)} - \lbra \bdot{x}_1,1 \rbra_{\oo(t)}
 = \lbra u_1,1 \rbra_{\OO(t)\cup\oo(t)} - |\oo|\bdot{x}{}_1\Gsup(t)
\end{equation}
(with $u_1$ in the integral over $\OO(t)\cup\oo(t)$ understood as the above-introduced extension). Since $\bfu$ is divergence-free in $\OO(t)\shcup\oo(t)$, we have $u_1=\Div(x_1\bfu) - x_1\Div\bfu=\Div(x_1\bfu)$ and the divergence theorem provides
\begin{equation}
  \lbra u_1,1 \rbra_{\OO(t)\cup\oo(t)} = \lbra x_1\bfu,\bfe_1 \rbra_{\G_L} -  \lbra x_1\bfu,\bfe_1 \rbra_{\G_0}
 = \lbra u_1,1 \rbra_{\G_L}.
\end{equation}
The average mass flow rate per wavelength is finally given by
\begin{equation}
    Q(\G)
 = |\OO| \shp C(\G) \shm \frac{|\oo|}{T}D(\G), \qquad C(\G) = \inv{T}\int_0^T \lbra u_1,1 \rbra_{\G_L} \dt 
\label{flowrate:expr},
\end{equation}
as a combination of boundary and particle integrals, a format that is well suited to the present use of BIE solvers. We note that thanks to the above-discussed velocity field extension, the last integral in~\eqref{flowrate:expr} involves the whole end section $\G_L$ irrespective of whether the particle crosses it at some particular time.\enlargethispage*{3ex}

Even though the mass flow rate is not involved in the examples presented in Section~\ref{sc:results}, we will derive and provide its shape derivative as a useful additional result, see Section~\ref{dd:explicit}.

\section{Shape sensitivities} \label{sc:sensitivities}

This section begins with an overview of available shape derivative concepts that also serves to set notation (Sec.~\ref{overview}). We then derive the governing problem for the shape derivative of the forward solution (Sec.~\ref{dd:forward}) and use this result to formulate shape derivatives of objective functionals in terms of an adjoint solution (Sec.~\ref{dd:J}). Specific cases of functionals are finally addressed in Sec.~\ref{dd:explicit}.

\subsection{Shape sensitivity analysis: an overview}
\label{overview}

We begin by collecting available shape derivative concepts that fit our needs, referring to e.g.~\cite[Chaps. 8,9]{del:zol} or \cite[Chap. 5]{henrot:pierre:18} for rigorous expositions of shape sensitivity theory. Let $\OO_{\text{all}}\shsubs\Rbb^2$ denote a fixed domain chosen so that $\OO\Subset\OO_{\text{all}}$ always holds for the shape optimization problem of interest. Upon introducing \emph{transformation velocity} fields, i.e. vector fields $\bfth:\OO_{\text{all}}\to\Rbb^2$ such that $\bfth=\bfze$ in a neighborhood of $\dO_{\text{all}}$, shape perturbations of domains $\OO\Subset\OO_{\text{all}}$ are mathematically described using a pseudo-time $\eta$ and a geometrical transformation of the form
\begin{equation}
  \bfx\shin\OO_{\text{all}} \mapsto \bfx^\eta = \bfx+\eta\bfth(\bfx), \label{shape:pert}
\end{equation}
which defines a parametrized family of domains $\OO_{\eta}(\bfth):=(\bfI\shp\eta\bfth)(\OO)$ for any given ``initial'' domain $\OO\Subset\OO_{\text{all}}$. The affine format~\eqref{shape:pert} is sufficient for defining the first-order derivatives at $\eta\sheq0$ used in this work.

\subsubsection*{Admissible shapes and their transformations} The set $\Ocal$ of admissible shapes for the fluid region $\OO$ in a channel period (in the wave frame) is chosen as
\begin{equation}
  \Ocal = \big\{ \OO\Subset\OO_{\text{all}}, \text{ $\OO$ is periodic and connected} \big\}.
  \label{Ocal:def}
\end{equation}
Accordingly, let the space $\Theta$ of admissible transformation velocities be defined as
\begin{equation}\label{Theta:def}
  \Theta = \Lcb \bfth\shin C_0^{1,\infty}(\OO_{\text{all}})\ \big|\ \text{(i) }
  \bfth|_{\Gpp}\sheq\bfth|_{\Gpm}, \ \text{(ii) }\bfth\sip\bfe_1\sheq0 \text{ on $\Gpp$},\ \text{(iii) }\bfth(\bfz^-)\sheq\bfze \Rcb,
\end{equation}
(where $C_0^{1,\infty}(\OO_{\text{all}}):=W^{1,\infty}(\OO_{\text{all}})\cap C_0^{1}(\OO_{\text{all}})$) ensuring that the shape perturbations (i) are periodic, (ii) prevent any deformation of the end sections $\Gp^{\pm}$ along the axial direction, and (iii) prevent vertical rigid translations of the channel domain. The provision $\bfth\in C_0^{1,\infty}(\OO_{\text{all}})$ ensures that (a) there exists $\eta_0\shg0$ such that $\OO_{\eta}(\bfth)\Subset\OO_{\text{all}}$ for any $\eta\shin[0,\eta_0]$, (b) the weak formulation for the shape derivative of the forward solution (see~\eqref{der:weak}) is well defined in the standard solution spaces, and (c) traces of $\bfth$ and $\bfna\bfth$ on $\dO_{\eta}$ are well-defined. Since here shape changes are driven by $\G$, the support of $\bfth$ may be limited to an arbitrary neighborhood of $\G$ in $\OO$.\enlargethispage*{3ex} 

\subsubsection*{Lagrangian derivatives}

In what follows, all shape derivatives are implicitly taken at some given configuration $\OO$, i.e. at initial "time" $\eta\sheq0$. The ``initial'' Lagrangian derivative $\dd{\bfa}$ of some (scalar or tensor-valued) field variable $\bfa(\bfx,\eta)$ is defined as
\begin{equation}
  \dd{\bfa}(\bfx)
 = \lim_{\eta\to0} \inv{\eta}\lsqb \bfa(\bfx^{\eta},\eta)-\bfa(\bfx,0) \rsqb \qquad\bfx\shin\OO, \label{dd:def}
\end{equation}
and the Lagrangian derivative of gradients and divergences of tensor fields are given by
\begin{equation}
  \text{(a) \ }(\bfna\bfa)^{\star} = \bfna\dd{\bfa} - \bfna\bfa\sip\bfna\bfth, \qquad
  \text{(b) \ }(\Div\bfa)^{\star} = \Div\dd{\bfa} - \bfna\bfa\dip(\bfna\bfth)\Tsup \label{dd:grad}
\end{equation}
Likewise, the first-order ``initial'' directional derivative $J'$ of a shape functional $J:\Ocal\to\Rbb$ is defined as
\begin{equation}
  J'(\OO;\bfth) = \lim_{\eta\to0} \inv{\eta}\lpar J(\OO_{\eta}(\bfth))-J(\OO) \rpar. \label{dd:J:def}
\end{equation}
In this work, Lagrangian derivatives with respect to the pseudo-time $\eta$ and the physical time $t$ are distinguished by being respectively called  ``Lagrangian'' and ``particle'' derivatives, and denoted using a star (e.g. $\dd{\bfa}$) or a dot (e.g. $\bdot{\bfx}$).

\subsubsection*{Lagrangian differentiation of integrals}

Consider, for a given transformation velocity field $\bfth\shin\Theta$, generic domain and contour integrals
\begin{equation}
  \text{(a) \ }I\Vsub(\eta) = \int_{\OO_{\eta}(\bfth)} F(\cdot,\eta) \dV, \qquad
  \text{(b) \ }I\Ssub(\eta) = \int_{S_{\eta}(\bfth)} F(\cdot,\eta) \ds, \label{IVS:def}
\end{equation}
where $\OO_{\eta}(\bfth)=(\bfI\shp\eta\bfth)(\OO)$ is a variable domain and $S_{\eta}(\bfth)\shdeq(\bfI\shp\eta\bfth)(S)$ a (possibly open) variable curve. The derivatives of $I\Vsub(\eta)$ and $I\Ssub(\eta)$ are given by
\begin{equation}
\begin{aligned}
  &\text{(a) \ }& \der{I\Vsub}{\eta}\Big|_{\eta=0} &= \iO \lsqb \dd{F} + F(\cdot,0)\,\Div\bfth \rsqb \dV, \\
  &\text{(b) \ }& \der{I\Ssub}{\eta}\Big|_{\eta=0} &= \iS \lsqb \dd{F} + F(\cdot,0)\,\DivS\bfth \rsqb \ds,
\end{aligned} \label{dd:IVS}
\end{equation}
which are well-known material differentiation formulas of continuum kinematics. In (\ref{dd:IVS}b), $\DivS$ is the tangential divergence operator, given in the present 2D context by
\begin{equation}
  \DivS\bfth = \lpar\bfI\shm\bfn\tensor\bfn\rpar\dip\bfna\bfth = \del{s}\theta_s-\kappa\theta_n
\label{DivS:def}
\end{equation}
where $\bfth$ on $\G$ is set, using the unit vectors defined in~\eqref{frenet}, in the form $\bfth=\theta_s\bftau+\theta_n\bfn$ and the curvature $\kappa$ also follows the conventions of~\eqref{frenet}.\enlargethispage*{3ex}

Finally, the following simple result (proved in Appendix~\ref{DivS:theta:proof}) will prove useful, as we will consider particle motions, and geometrical transformations of particle-carrying fluid regions, that preserve the particle shape:
\begin{lemma}\label{DivS:theta}
Let $\bfw\shin\RS$ be a rigid-body vector field on $\omega$, and let $\bfu\shin\US$ denote any extension of $\bfw$ in $\OO$ satisfying $\bfu|_{\gamma}=\bfw$. Then: $\DivS\bfu = 0$ on $\gamma$.
\end{lemma}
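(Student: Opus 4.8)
The plan is to exploit the fact that the tangential divergence is an \emph{intrinsic} surface operator, so that $\DivS\bfu$ on $\gamma$ depends on $\bfu$ only through its trace $\bfu|_{\gamma}$. Since by hypothesis $\bfu|_{\gamma}=\bfw$, this reduces the claim at once to the purely geometric identity $\DivS\bfw=0$ on $\gamma$, in which the arbitrary extension of $\bfw$ into $\OO$ plays no role.

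To make the reduction precise I would invoke the explicit representation~\eqref{DivS:def}, namely $\DivS\bfth=\del{s}\theta_s-\kappa\theta_n$ with $\theta_s=\bfth\sip\bftau$ and $\theta_n=\bfth\sip\bfn$. Every ingredient on the right-hand side—the arclength derivative $\del{s}$ taken along $\gamma$, the tangential and normal components, and the curvature $\kappa$—is determined by the values of the field on $\gamma$ alone. Hence $\DivS\bfu=\del{s}(\bfu\sip\bftau)-\kappa(\bfu\sip\bfn)=\del{s}(\bfw\sip\bftau)-\kappa(\bfw\sip\bfn)=\DivS\bfw$ on $\gamma$, and it suffices to show that the latter vanishes.

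The remaining step is a direct Frenet computation. Writing $\bfw=\bfrhh+\hatr\,\bfe_3\shtimes\bfx$ for constant $\bfrhh\shin\Rbb^2$ and $\hatr\shin\Rbb$, and using $\del{s}\bfx=\bftau$ from~\eqref{frenet}, I get $\del{s}\bfw=\hatr\,\bfe_3\shtimes\bftau$, which is orthogonal to $\bftau$ since $\bfe_3\shtimes\bftau$ is $\bftau$ rotated by a right angle. Consequently, setting $w_s=\bfw\sip\bftau$, $w_n=\bfw\sip\bfn$ and using $\del{s}\bftau=\kappa\bfn$,
\begin{equation*}
  \del{s}w_s = (\del{s}\bfw)\sip\bftau + \bfw\sip\del{s}\bftau = 0 + \kappa\,(\bfw\sip\bfn) = \kappa\,w_n,
\end{equation*}
so that $\DivS\bfw=\del{s}w_s-\kappa w_n=0$. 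Combined with the reduction above, this yields $\DivS\bfu=0$ on $\gamma$.

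I do not anticipate a serious obstacle: the computation is short and, as the cancellation $\kappa w_n-\kappa w_n$ shows, insensitive to the orientation convention relating $\bfe_3\shtimes\bftau$ to $\pm\bfn$. The only point genuinely deserving care is the first one—justifying that $\DivS\bfu$ may be replaced by $\DivS\bfw$—which is why I would lead with the intrinsicness observation rather than attempt to differentiate the (merely $H^1$, a priori unknown) extension $\bfu$ directly; the smoothness of the trace $\bfw$ then makes all arclength derivatives classical. Geometrically, the vanishing simply reflects that a rigid-body velocity generates a flow of isometries, under which the surface line element is preserved.
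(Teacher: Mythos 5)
Your proof is correct and follows essentially the same route as the paper's: the paper's one-line proof invokes the curvilinear-coordinate identity~\eqref{Dv:gamma}a, whose derivation in Appendix~\ref{grad:curv} rests on exactly your key observation that $\del{s}\bfu=\varrho\bfn$ (i.e.\ $\hatr\,\bfe_3\shtimes\bftau$) is normal to $\gamma$, so that $\DivS\bfu=\bftau\sip\del{s}\bfu=0$. Your explicit remark that $\DivS$ is intrinsic to the trace (so the $H^1$ extension never needs to be differentiated) is a worthwhile clarification the paper leaves implicit.
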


\subsubsection*{Shape functionals and structure theorem}

The structure theorem for shape derivatives (see e.g.~\cite[Chap. 8, Sec. 3.3]{del:zol}) then states that the derivative of any shape functional $J$ is a linear functional in the normal transformation velocity $\theta_n\sheq\bfn\sip\bfth|_{\dO}$. For PDE-constrained shape optimization problems involving sufficiently smooth domains and data, the derivative $J'(\OO;\bfth)$ has the general form
\begin{equation}
  J'(\OO;\bfth) = \idO g\, \theta_n \ds, \label{shape:structure}
\end{equation}
where $g$ is the \emph{shape gradient} of $J$: intuitively speaking, the shape of $\dO_{\eta}$ determines that of $\OO_{\eta}$ while the tangential part of $\bfth$ leaves $\OO_{\eta}$ unchanged at leading order $O(\eta)$.

\subsubsection*{Example: derivative of channel volume}
The channel volume $V(\OO)\shdeq|\Omega|$ being given by~(\ref{IVS:def}a) with $F=1$, identity~(\ref{dd:IVS}a) and Green's theorem readily yield
\begin{equation}
  V'(\OO;\bfth) = \iO \Div\bfth \dV = \idO \theta_n \ds = \iG \theta_n \ds, \label{dd:volume}
\end{equation}
the last equality being due to provision (ii) in~\eqref{Theta:def} and the rigid particle motion.\enlargethispage*{5ex}

\subsection{Shape derivative of the forward solution}
\label{dd:forward}

The functionals introduced in Sec.~\ref{optim:pb} depend on $\G$ implicitly through the forward solution $(\bfu,p,\bff,\bfh,\bfxr)$. Finding their shape derivatives then involves the forward solution derivative  $(\dd{\bfu},\dd{p},\dd{\bff},\dd{\bfh},\dd{\bfx})$. Unlike in the earlier study~\cite{bonnet2020shape}, here the flow domain evolves in time in a manner that is not \emph{a priori} known. Towards setting up the governing problem for  $(\dd{\bfu},\dd{p},\dd{\bff},\dd{\bfh},\dd{\bfx})$, we thus begin by formulating the sensitivity of particle evolution to the shape of the channel wall.

Perturbations of the wall shape, described through geometrical transformations of the form~\eqref{shape:pert}, induce perturbations of the particle motion through the evolution problem~(\ref{forward:PDE}-\ref{gamma0}), which can be described by making the rigid-body motion~\eqref{rigid:motion} dependent on $\eta$. Hence, for any material point of $\oo_{\eta}(t)$, we have
\begin{equation}
%  \oo_{\eta}(t) = \bfxr(\oo_0,t,\eta), \qquad \text{with} \quad
  \bfxr^{\eta} = \bfxr(\bfx_0,t,\eta) := \bfc(t,\eta) + \bfR(t,\eta)\sip\bfx_0, \quad\bfx_0\shin\oo_0
%  \oo(t)\ni\bfx^{\eta}(\bfx_0,t) = \bfc(t,\eta) + \bfR(t,\eta)\sip\bfx_0, \qquad \bfx_0\shin\oo_0,
\label{rigid:motion:eta}
\end{equation}
The Lagrangian derivative at $\eta\sheq0$ of a point $\bfx^{\eta}$ of $\oo_{\eta}(t)$ following the shape transformation, being defined by $\bfx^{\eta} = \bfx \shp \eta\dd{\bfx} + o(\eta)$ whenever such expansion exists, is thus given by
\begin{equation}
  \dd{\bfx} = \dd{\bfxr}(\bfx_0,t) = \del{\eta}\bfc(t,0) + \del{\eta}\bfR(t,0)\bfx_0 \quad\text{at \ } \bfx=\bfxr(\bfx_0,t)\in\oo(T)
\end{equation}
provided $\bfx,\bfc$ depend smoothly enough on $\eta$, and is moreover readily found to be a rigid-body velocity (since $\bfx_0\sheq\lsqb\bfR\Tsup\sip(\bfx\shm\bfc)\rsqb(t,0)$ and $\lsqb\del{\eta}\bfR\sip\bfR\Tsup\shp\bfR\sip\del{\eta}\bfR\Tsup\rsqb(t,0)\sheq\bfze$). In addition, the particle motion being assumed for each $\eta$ to start from the same initial particle $\oo_0$, we have $\bfc(0,\eta)\sheq\bfze,\;\bfR(0,\eta)\sheq\bfI$ and hence
\begin{equation}
  \dd{\bfxr}(\cdot,0) = \bfze. \label{beta:init}
\end{equation}
Finally, as the no-slip condition~\eqref{forward:PDE:particle} remains true for any small enough $\eta$ (i.e. $\bfu^{\eta}=\bdot{\bfx}{}^{\eta}$), we find that
\begin{equation}
  \dd{\bfu}(\bfx,t) = (\dd{\bfx}^{\Bdot}\!\!\!)(\bfx,t), \qquad \bfx\shin\gamma(t) \label{dd:u:beta}
\end{equation}

Since (again) we have $\bfx^{\eta} = \bfx \shp \eta\dd{\bfx} + o(\eta)$ in $\oo(t)$, $\dd{\bfx}$ is the transformation velocity for perturbations $\oo^{\eta}(t)$ of the particle $\oo(t)$. Sensitivies of integrals over $\OO$ or $\gamma$ with respect to the shape of $\gamma$ are therefore given by~\eqref{dd:IVS} with $\bfth,S$ replaced by $\dd{\bfx},\gamma$. The support of any (arbitrary) extension of $\dd{\bfx}$ to $\OO$ required in~(\ref{dd:IVS}a) may be limited to a neighborhood of $\gamma$ in $\OO$. In fact, if the particle motion avoids any contact with the channel wall, we may assume that $\text{supp}(\bfth)\shcap\oo\sheq\emptyset$ and $\text{supp}(\dd{\bfx}\shcap\G)\sheq\emptyset$.

We are now ready to formulate the shape derivative of the forward solution:\enlargethispage*{3ex}
\begin{proposition}\label{lm1}
The shape derivative $(\dd{\bfu},\dd{p},\dd{\bff},\dd{\bfh},\dd{\bfx})$ of the forward solution $(\bfu,p,\bff,\bfh,\bfxr)$ satisfies
\begin{equation}\begin{multlined}
\text{For each $t\shin[0,T]$, find }(\dd{\bfu},\dd{p},\dd{\bff},\dd{\bfh},\dd{\bfx}) \in \US\shtimes\Pcal\shtimes\FS\shtimes\HS\shtimes\RS, \\ \left\{
\begin{aligned}
\text{(a) \ }& a(\dd{\bfu},\bfv) -b(\dd{\bfu},q) - b(\bfv,\dd{p}) - \lbra\dd{\bff},\bfv\rbra_{\G}
 - \lbra\dd{\bfh},\bfv\rbra_{\!\gamma(t)} + \lbra\bfE\lpar(\bfu,p),(\bfv,q)\rpar,\bfna\Tsup\dd{\bfx} \rbra_{\OO(t)}
 \suite\qquad\qquad
 = -\lbra\bfE\lpar(\bfu,p),(\bfv,q)\rpar,\bfna\Tsup\bfth \rbra_{\OO(t)} + \lbra\bff,\bfv\DivS\bfth\rbra_{\G} & \forall(\bfv,q)&\shin\US\shtimes\Pcal,\hspace*{-1em} \\
% \text{(b)}&& -b(\dd{\bfu},q) &= b^1(\bfu,q,\bfth\shp\bfbe) && \forall q\shin\Pcal ,\\
\text{(b) \ }&	\lbra\dd{\bfu}{}\Dsup,\bfg\rbra_{\G} - \lbra\dd{\bfu},\bfg\rbra_{\G} = 0 & \forall\bfg &\shin\FS, \\
\text{(c) \ }&	\lbra (\dd{\bfx}^{\Bdot}\!\!\!),\bfk\rbra_{\!\gamma(t)} - \lbra\dd{\bfu},\bfk\rbra_{\!\gamma(t)}
 = 0 & \forall\bfk &\shin\HS, \\[-0.5ex]
\text{(d) \ }& \lbra \dd{\bfh}, \bfrh \rbra_{\!\gamma(t)}
 = 0 & \forall\bfrh&\shin\RS,
\end{aligned}\right.\label{der:weak}\hspace*{-1.5em}
\end{multlined}
\end{equation}
where the particle motion $\gamma(t)=\bfxr(\gamma_0,t)$ is known (from solving the forward problem). Moreover, $\dd{\bfx}$ satisfies the initial condition~\eqref{beta:init}. The (symmetric in $\lpar(\bfu,p),(\bfv,q)\rpar$) tensor-valued function $\bfE$ is defined by
\begin{equation}\begin{multlined}
  \bfE\lpar(\bfu,p),(\bfv,q)\rpar
 = 2(\bfD[\bfu]\dip\bfD[\bfv])\bfI - 2\bfD[\bfu]\sip\bfna\bfv - 2\bfD[\bfv]\sip\bfna\bfu \\
   + p \lsqb \bfna\bfv - (\Div\bfv) \bfI \rsqb + q \lsqb \bfna\bfu - (\Div\bfu) \bfI \rsqb, \label{E:expr}
\end{multlined}
\end{equation}
and the Lagrangian derivative $\dd{\bfu}{}\Dsup$ of the Dirichlet data $\bfuD$ involved in~(\ref{der:weak}b) is given by
\begin{equation}
  \dd{\bfu}{}\Dsup
 = \dd{\ell}\bftau + \ell(\del{s}\theta_n + \kappa\theta_s)\bfn, \qquad
  \text{with \ \ } \dd{\ell} = -\int_{0}^{\ell} \kappa\theta_n \ds. \label{lm4}
\end{equation}
\end{proposition}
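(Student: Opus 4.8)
The plan is to differentiate the four weak equations~(\ref{forward:weak}a--d) with respect to the pseudo-time $\eta$ at $\eta\sheq0$, using the material differentiation identities~\eqref{dd:IVS} and~\eqref{dd:grad}. Throughout I would use test functions transported by the geometric deformation, i.e.\ $\bfv^{\eta}(\bfx^{\eta})\sheq\bfv(\bfx)$ and likewise for $q,\bfg,\bfk$, so that their Lagrangian derivatives vanish; this is legitimate because each equation holds for \emph{all} admissible test functions. The fluid region $\OO$ is bounded by the wall $\G$, which deforms with velocity $\bfth$, and by the particle contour $\gamma$, which deforms with velocity $\dd{\bfx}$. Under the disjoint-support assumption recorded just before the statement, the transformation velocity of $\OO$ is the combined field $\boldsymbol{V}\shdeq\bfth\shp\dd{\bfx}$, and all interior Lagrangian derivatives are taken with respect to $\boldsymbol{V}$. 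Recall that $\dd{\bfx}$ is a rigid-body field on $\gamma$ and obeys the initial condition~\eqref{beta:init}, while the particle history $t\mapsto\gamma(t)$ is known from the forward solve.

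For~(\ref{forward:weak}a) I would apply~(\ref{dd:IVS}a) to the domain integrals $a(\bfu,\bfv)$, $b(\bfu,q)$, $b(\bfv,p)$ and~(\ref{dd:IVS}b) to the contour integrals $\lbra\bff,\bfv\rbra_{\G}$ and $\lbra\bfh,\bfv\rbra_{\!\gamma}$. The ``principal'' parts, obtained from $\dd{(\bfD[\bfu])}\sheq\bfD[\dd{\bfu}]-\tdemi(\bfna\bfu\sip\bfna\boldsymbol{V}\shp\bfna\Tsup\boldsymbol{V}\sip\bfna\Tsup\bfu)$ and the analogous identities for $\Div$ from~\eqref{dd:grad}, reproduce $a(\dd{\bfu},\bfv)-b(\dd{\bfu},q)-b(\bfv,\dd{p})-\lbra\dd{\bff},\bfv\rbra_{\G}-\lbra\dd{\bfh},\bfv\rbra_{\!\gamma}$. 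All remaining ``correction'' contributions---those carrying $\bfna\boldsymbol{V}$ through~\eqref{dd:grad} together with the $\Div\boldsymbol{V}$ and $\DivS$ factors supplied by~\eqref{dd:IVS}---must then be collected and shown, by tensor-algebra rearrangement, to assemble exactly into $-\lbra\bfE\lpar(\bfu,p),(\bfv,q)\rpar,\bfna\Tsup\boldsymbol{V}\rbra_{\OO}\shp\lbra\bff,\bfv\DivS\bfth\rbra_{\G}$, with $\bfE$ as in~\eqref{E:expr}. Here the $2(\bfD[\bfu]\dip\bfD[\bfv])\bfI$ term originates from the $F\,\Div\boldsymbol{V}$ factor in~(\ref{dd:IVS}a) with $F\sheq2\bfD[\bfu]\dip\bfD[\bfv]$; the $-2\bfD[\bfu]\sip\bfna\bfv-2\bfD[\bfv]\sip\bfna\bfu$ terms from the $\bfna\boldsymbol{V}$ corrections in $\dd{(\bfD[\bfu])}$ and $\dd{(\bfD[\bfv])}$; and the two bracketed pressure terms from differentiating $b(\bfv,p)$ and $b(\bfu,q)$. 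Splitting $\boldsymbol{V}\sheq\bfth\shp\dd{\bfx}$ and moving the unknown $\dd{\bfx}$-dependent integral to the left-hand side then yields~(\ref{der:weak}a). Crucially, the would-be $\gamma$-correction $\lbra\bfh,\bfv\DivS\dd{\bfx}\rbra_{\!\gamma}$ vanishes by Lemma~\ref{DivS:theta}, since $\dd{\bfx}$ is rigid-body on $\gamma$, so only the wall term $\lbra\bff,\bfv\DivS\bfth\rbra_{\G}$ survives on the right.

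Equations~(\ref{forward:weak}b--d) I would treat by direct application of~(\ref{dd:IVS}b). For~(b), both contour integrals live on $\G$, so their $\DivS\bfth$ corrections carry the factors $\bfuD$ and $\bfu$ respectively; since $\bfu\sheq\bfuD$ on $\G$ in the unperturbed state these corrections cancel, leaving $\lbra\dd{\bfu}{}\Dsup,\bfg\rbra_{\G}-\lbra\dd{\bfu},\bfg\rbra_{\G}\sheq0$. For~(c), the $\DivS\dd{\bfx}$ corrections vanish by Lemma~\ref{DivS:theta}, while commuting the pseudo-time and physical-time derivatives identifies the Lagrangian derivative of the particle velocity $\bdot{\bfx}$ with the particle derivative $(\dd{\bfx}^{\Bdot}\!\!\!)$ of its shape derivative, producing~(\ref{der:weak}c) (the weak form of~\eqref{dd:u:beta}). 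For~(d), the $\DivS\dd{\bfx}$ correction again vanishes by Lemma~\ref{DivS:theta}; handling the Lagrangian derivative of the rigid-body test field with the help of the forward net-force and net-torque balances then leads to the equilibrium condition~(\ref{der:weak}d) on $\dd{\bfh}$. Finally, to obtain~\eqref{lm4} I would write $\bfuD\sheq\ell\bftau$ and compute the Lagrangian derivative of the unit tangent, $\dd{\bftau}\sheq(\del{s}\theta_n\shp\kappa\theta_s)\bfn$; differentiating the wall-length (inextensibility) relation via~(\ref{dd:IVS}b) with $F\sheq1$ and using $\DivS\bfth\sheq\del{s}\theta_s\shm\kappa\theta_n$ from~\eqref{DivS:def} then gives $\dd{\ell}\sheq-\int_0^{\ell}\kappa\theta_n\ds$, the $\del{s}\theta_s$ contribution integrating out by periodicity and the endpoint condition~(iii) in~\eqref{Theta:def}.

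I expect the main obstacle to be the tensor-algebra verification in equation~(a): showing that the numerous $\bfna\boldsymbol{V}$-, $\Div\boldsymbol{V}$- and $\DivS$-corrections collapse precisely into the symmetric tensor $\bfE$ of~\eqref{E:expr}. This step demands careful bookkeeping and repeated use of the symmetry of $\bfD$ and of identities~(\ref{dd:grad}a,b); by contrast, equations~(b)--(d) and the data derivative~\eqref{lm4} follow comparatively directly once Lemma~\ref{DivS:theta} and the rigid-body character of $\dd{\bfx}$ are in hand.
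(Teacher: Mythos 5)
Your proposal follows essentially the same route as the paper's proof: differentiate the weak formulation~\eqref{forward:weak} with convected test functions (so their Lagrangian derivatives vanish), apply the material differentiation identities~\eqref{dd:IVS} and~\eqref{dd:grad} with the wall velocity $\bfth$ and the induced particle velocity $\dd{\bfx}$ as the combined transformation field, invoke Lemma~\ref{DivS:theta} to kill the $\DivS\dd{\bfx}$ corrections on $\gamma(t)$ and~\eqref{dd:u:beta} for equation~(c), and collect the remaining domain corrections into the tensor $\bfE$ of~\eqref{E:expr}. Your derivation of~\eqref{lm4} via $\dd{\bftau}$ and the tangential divergence is an equivalent reorganization of the paper's Appendix computation with the metric factor $g_\eta$, so the two arguments coincide in substance and at the same level of detail.
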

\begin{proof}
The proposition is obtained by applying the material differentiation identities~\eqref{dd:IVS} to the weak formulation~\eqref{forward:weak}, assuming that the test functions satisfy $(\dd{\bfv},\dd{q},\dd{\bfg},\dd{\bfk},\dd{\bfrh})=(\bfze,0,\bfze,\bfze,\bfze)$, i.e. are convected under the shape perturbation. The latter provision is made possible by the absence of boundary constraints in the definition of $\US,\Pcal,\FS,\HS,\RS$ (Sec.~\ref{sec:forward:weak}). Moreover, equations (a), (c), (d) use that $\DivS\dd{\bfx}\sheq0$ (Lemma~\ref{DivS:theta}), while (c) also exploits~\eqref{dd:u:beta}. The tensor-valued function $\bfE$ arises from rearranging all domain integrals that explicitly involve either $\bfth$ or $\dd{\bfx}$. Finally, the proof of the given expression for $\dd{\bfu}{}\Dsup$ is deferred to Appendix~\ref{app}.\enlargethispage*{5ex}
\end{proof}

\begin{remark}
The provision $\bfth\shin C_0^{1,\infty}(\OO_{\text{all}})$ in~\eqref{Theta:def} ensures that domain integrals $\lbra \bfE,\bfna\Tsup\!\bfth\rbra_{\OO}$ appearing in Proposition~\ref{lm1} are well-defined for any $(\bfu,p)\shin\US\shtimes\Pcal$.
\end{remark}
\begin{remark}
The mean of $\dd{p}$ is in practice irrelevant; setting it through $\lbra \dd{p},1\rbra_{\OO}+\lbra p\Div\bfth,1\rbra_{\OO}=0$ would preserve the zero-mean constraint on $p$ under shape perturbations.
\end{remark}
\begin{remark}
The tensor-valued function $\bfE$ given by~\eqref{E:expr} is the analog for Stokes flows of the elastic energy-momentum tensor~\cite{eshelby:75}, which plays a central role in the analysis of energy changes induced by crack growth in solids.
\end{remark}

\subsection{Shape derivative of a generic functional}
\label{dd:J}

Consider generic objective functionals
\begin{equation}
  J(\G) = \ioT G(\bfx) \dV + \iT \Lcb \iG F(\bff,\G) \ds + \iGL H(u_1) \ds \Rcb \dt \label{J:gen:def}
\end{equation}
where $\bff$, $\bfu$ and $\oo(T)$ (through $\bfxr(\cdot,T)$) are components of the forward solution and $F,G,H$ are sufficiently regular densities. The dissipation, particle centroid and mass flow rate functionals introduced in Sec.~\ref{optim:pb} all have the format~\eqref{J:gen:def}, see Sec.~\ref{dd:explicit}, thanks in particular to the assumed explicit dependence of $F$ on the wall shape. The chosen notation $J(\G)$ serves to emphasize the fact that the shape dependency is driven by $\G$; in particular, the particle motion induces a $\G$-dependent evolution of the fluid domain $\OO(T)$.

The derivative of the cost functional~\eqref{J:gen:def} is then given, using~(\ref{dd:IVS}a,b), by
\begin{equation}
\begin{multlined}\label{dJ:gen}
  J'(\G;\bfth)
 = \ioT \bfna G\sip\dd{\bfx} \dV + \iT \Lcb \iG \lsqb \del{\bff}F(\bff,\G)\sip\dd{\bff} + F^1(\bff,\G,\bfth) + F(\bff,\G)\DivS\bfth \rsqb \ds \\
  + \iGL \lsqb \del{u_1}H(u_1)\dd{u}_1 + H(u_1)\del{2}\theta_2 \rsqb \ds \Rcb \dt,
\end{multlined}
\end{equation}
where $F^1:=\dd{F}|_{\scriptsize \dd{\bff}=\bfze}$ and having used that $\DivS\bfth=\del{2}\theta_2$ on $\GL$ and $\Div\dd{\bfx}\sheq0$ in $\oo(t)$.

\subsubsection*{Adjoint problem} The shape derivative $J'(\G)$ involves the forward solution derivatives $\dd{\bfu},\dd{\bff},\dd{\bfx}$ solving problem~\eqref{der:weak}. Finding the latter therefore seems at first glance necessary for evaluating $J'(\G;\bfth)$ in a given shape perturbation $\bfth$, but in fact can be avoided with the help of an adjoint problem defined at any time $t$ by the weak formulation:
\begin{align}
\lefteqn{
\text{For $t\shin[0,T]$, find }(\bfuh,\hatp,\bffh,\bfhh,\bfxh) \in \US\shtimes\Pcal\shtimes\FS\shtimes\HS\shtimes\RS,  } & \suite
\left\{
\begin{aligned}
\text{(a) \ \ }&	a(\bfv,\bfuh) -b(\bfuh,q) - b(\bfv,\hatp) - \lbra\bfv,\bffh\rbra_{\G}
 - \lbra\bfv,\bfhh\rbra_{\!\gamma(t)}
% \\ & \hspace*{16em}
 = -\lbra \del{u_1}H,v_1 \rbra_{\G_L} & \forall(\bfv,q) &\shin\US\shtimes\Pcal, \\
% & \hspace*{-1em}\forall(\bfv,q) &\shin\US\shtimes\Pcal, \hspace*{-1em} \\
%\text{(b) }&& -b(\bfuh,q)
% &= 0 &\quad& \forall q\shin\Pcal, \\
\text{(b) \ \ }& \lbra\bfg,\bfuh\rbra_{\G}
 = \lbra \del{\bff}F,\bfg \rbra_{\G} & \forall\bfg &\shin\FS, \\
\text{(c) \ \ }& - \lbra\bfk,\bfuh\rbra_{\!\gamma(t)} + \lbra \bfk, \bfxh \rbra_{\!\gamma(t)}
 = 0 & \forall\bfk &\shin\HS, \\[-1ex]
\text{(d) \ \ }& -\lbra \bdot{\bfhh},\bfrh \rbra_{\!\gamma(t)}
 + \lbra\bfE\lpar(\bfu,p),(\bfuh,\hatp)\rpar,\bfna\Tsup\!\bfrh \rbra_{\OO(t)} = 0 & \forall\bfrh &\shin\RS,  %\\
%\text{(e) }&& -\lbra \bfrh,\bdot{\bfhh}\rbra_{\!\gamma(t)} + a^1(\bfu,\bfuh,\bfrh) - b^1(\bfuh,p,\bfrh) - b^1(\bfu,\hatp,\bfrh) &= \lbra \text{d}_t(\del{\bfw}G),\bfrh \rbra_{\!\omega(t)} && \forall\bfrh\shin\RS,  \\
% \text{(e) \ \ }& \lbra \bfhh(\cdot,T),\bfrh \rbra_{\!\gamma(T)}
% = -\lbra \bfna G,\bfrh \rbra_{\!\omega(T)} & \forall\bfrh &\shin\RS,
\end{aligned}\right. \label{adj:weak} \intertext{and the final condition}
 \lefteqn{\qquad\text{(e) \ \ } \lbra \bfhh(\cdot,T),\bfrh \rbra_{\!\gamma(T)} = -\lbra \bfna G,\bfrh \rbra_{\!\omega(T)}
 \qquad \text{at $t\sheq T$}\hspace*{12.5em} \forall\bfrh\shin\RS,} \label{final:cond}
\end{align}
where the particle motion $\oo(t)$ is again known from solving the forward problem.
The adjoint state $(\bfuh,\hatp,\bffh,\bfhh,\bfxh)$ is thus created by applying a pressure difference $\Delta\hatp=\del{u_1}H$ between the channel end sections, while prescribing a velocity $\bfuh=\del{\bff}G$ on the channel walls; moreover, condition~(\ref{adj:weak}d) links the evolution of the net hydrodynamic force and torque on $\gamma(t)$ to the other variables of the adjoint solution. The particle derivative $\bdot{\bfhh}$ of the adjoint traction $\hat{\bfh}$ is taken following the known motion of the particle $\oo(t)$.\enlargethispage*{3ex}

A backward time-stepping treatment using the sequence of discrete times introduced in Sec.~\ref{sec:forward:weak} may be defined by treating the particle derivative $\bdot{\bfhh}$ in Euler-explicit form, setting $\bfhh_n\shdeq\bfhh(\bfx(t_n),t_{n})$ (i.e. following material points $\bfx_n$ in the known forward motion of $\gamma$) and $\bdot{\bfhh}_{n+1}\approx\lpar \bfhh_{n+1}\shm\bfhh_{n} \rpar/\Delta t$. Condition~(\ref{adj:weak}d) then takes the form
\begin{equation}\label{adj:weak:e:FD}
  \lbra \bfhh_{n},\bfrh \rbra_{\!\gamma_{n+1}}
 = \lbra \bfhh_{n+1},\bfrh \rbra_{\!\gamma_{n+1}} - \Delta t \lbra \bfn\sip\bfE_{n+1},\bfrh \rbra_{\!\gamma_{n+1}}
% + \lbra (\bfna G)_{n+1}-(\bfna G)_{n},\bfrh \rbra_{\gamma_{n+1}},
\end{equation}
where $\bfn\sip\bfE_{n+1}$ is given by \eqref{E:gamma} with the forward and adjoint solutions evaluated at $t\sheq t_{n+1}$. A natural time-stepping method for the adjoint problem then is:\enlargethispage*{1ex}
\begin{enumerate}
\item Final time ($t\sheq t_N$): solve equations~(\ref{adj:weak}a-c) and~\eqref{final:cond} for $\lpar \bfuh,\hatp,\bffh,\bfhh,\bfxh \rpar(t_N)$.
\item Generic time ($t\sheq t_n,\;0\shleq n\shl N$): solve equations~(\ref{adj:weak}a-d) for $\lpar\bfuh,\hatp,\bffh,\bfhh,\bfxh\rpar(t_n)$, with condition~(\ref{adj:weak}d) in the time-discrete form~\eqref{adj:weak:e:FD}.\vspace*{2ex}
\end{enumerate}
\begin{remark}
Like the forward problem~\eqref{forward:weak}, the adjoint problem~\eqref{adj:weak} is evolutive. The adjoint solution evolves backwards in time, from the final condition~(\ref{adj:weak}e). The particle motion in problem~\eqref{adj:weak} is given, whereas it was unknown in problem~\eqref{forward:weak}.
\end{remark}
\begin{remark}
The provision $\forall\bfrh\shin\RS$ in~(\ref{adj:weak}d) is a notational abuse, as $\bfrh$ therein is an extension to $C^{1,\infty}(\overline{\OO})$ of a rigid-body transformation velocity $\bfrh|_{\!\oo(t)}\shin\RS$. Lemma~\ref{lm5} will show that, for given $\bfrh|_{\!\oo(t)}\shin\RS$, (\ref{adj:weak}d) does not depend on the choice of extension.
\end{remark}

\subsubsection*{Shape derivative using adjoint solution} Now, combining the derivative problem~\eqref{der:weak} and the adjoint problem~\eqref{adj:weak} with appropriate choices of test functions, we obtain an expression of $J'(\G;\bfth)$ that no longer involves the derivative solution:

\begin{lemma}\label{lm2}
The shape derivative $J'(\G;\bfth)$ is given by
\begin{equation}
\begin{multlined}\label{dd:J:gen:lm2}
  J'(\G;\bfth)
 = \iT \Lcb \lbra\bfE\lpar(\bfu,p),(\bfuh,\hatp)\rpar,\bfna\Tsup\!\bfth \rbra_{\OO(t)}
  + \lbra\dd{\bfu}{}\Dsup,\bffh\rbra_{\G} + \lbra H(u_1),\del{2}\theta_2 \rbra_{\G_L} \hspace*{-1ex} \\
  + \iG \lcb F^1(\bff,\G,\bfth) + \lsqb F(\bff,\G) \shm \del{\bff}F(\bff,\G) \sip \bff \rsqb\,\DivS\bfth \rcb \ds \Rcb\dt
%  + \iGL H(u_1)\,\del{2}\theta_2 \ds \Rcb \dt
\end{multlined}
\end{equation}
in terms of the transformation velocity $\bfth$ on $\G$, of $(\bfu,p,\bff)$ solving the forward problem~\eqref{forward:weak}, and of $(\bfuh,\hatp,\bffh)$ solving the adjoint problem~\eqref{adj:weak}.
\end{lemma}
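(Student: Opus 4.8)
The plan is to eliminate the forward-derivative solution $(\dd{\bfu},\dd{p},\dd{\bff},\dd{\bfh},\dd{\bfx})$ from the expression~\eqref{dJ:gen} for $J'(\G;\bfth)$ by pairing the derivative problem~\eqref{der:weak} against the adjoint problem~\eqref{adj:weak} with a careful choice of test functions. The guiding principle is the standard adjoint trick: every term in~\eqref{dJ:gen} that involves an unknown derivative quantity ($\dd{\bfu}$, $\dd{\bff}$, $\dd{u}_1$, or $\bfna G\sip\dd{\bfx}$) should be reproduced by a suitable bilinear pairing in the adjoint equations, so that it can be traded for terms involving only the known forward and adjoint states plus the explicit transformation velocity $\bfth$.

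Concretely, first I would test the derivative problem~\eqref{der:weak} with the \emph{adjoint} fields: take $(\bfv,q)=(\bfuh,\hatp)$ in~(\ref{der:weak}a), $\bfg=\bffh$ in~(\ref{der:weak}b), $\bfk=\bfhh$ in~(\ref{der:weak}c), and $\bfrh=\dd{\bfx}$ in~(\ref{der:weak}d). Dually, I would test the adjoint problem~\eqref{adj:weak} with the \emph{derivative} fields: take $(\bfv,q)=(\dd{\bfu},\dd{p})$ in~(\ref{adj:weak}a), $\bfg=\dd{\bff}$ in~(\ref{adj:weak}b), $\bfk=(\dd{\bfx}^{\Bdot}\!\!\!)$ in~(\ref{adj:weak}c), and $\bfrh=\dd{\bfx}$ in~(\ref{adj:weak}d). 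The symmetry of $a$ and of $\bfE\lpar(\bfu,p),(\cdot,\cdot)\rpar$ in its two slots, together with the matched sign conventions built into~\eqref{adj:weak}, should make the ``interior'' pairings cancel in pairs: the $a$, $b$, $\lbra\dd{\bff},\bfuh\rbra_{\G}$, $\lbra\dd{\bfh},\bfhh\rbra$, and $\lbra\dd{\bfu},\bffh\rbra$ type terms all annihilate each other. What survives on one side are precisely the three ``load'' terms of the adjoint problem---$-\lbra\del{u_1}H,\dd{u}_1\rbra_{\G_L}$ from~(\ref{adj:weak}a), $\lbra\del{\bff}F,\dd{\bff}\rbra_{\G}$ from~(\ref{adj:weak}b), and the right-hand side of~(\ref{der:weak}a) containing $\lbra\bfE((\bfu,p),(\bfuh,\hatp)),\bfna\Tsup\bfth\rbra_{\OO}$ and $\lbra\bff,\bfuh\DivS\bfth\rbra_{\G}$; the latter coupled with~(\ref{der:weak}b) and the Dirichlet identity gives the $\lbra\dd{\bfu}{}\Dsup,\bffh\rbra_{\G}$ contribution---which I can then substitute directly into~\eqref{dJ:gen} to replace $\dd{\bfu}_1$, $\dd{\bff}$, and the $\bfna G\sip\dd{\bfx}$ term.

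The delicate part, and where I expect the main obstacle, is the \emph{time-coupling} through the particle: unlike a purely instantaneous (elliptic) adjoint identity, the rigid-body/particle terms couple different time levels. The quantity $\lbra\bfna G,\dd{\bfx}\rbra_{\oo(T)}$ lives only at the final time, while condition~(\ref{der:weak}c) feeds in the \emph{particle derivative} $(\dd{\bfx}^{\Bdot}\!\!\!)$ and the adjoint condition~(\ref{adj:weak}d) feeds in the \emph{backward} particle derivative $\bdot{\bfhh}$. Reconciling these requires an integration by parts in physical time $t$ over $[0,T]$ applied to the pairing $\lbra\bfhh,\dd{\bfx}\rbra_{\!\gamma(t)}$ (or $\lbra\bfh,\dd{\bfx}\rbra$), transporting derivatives between $\bfhh$ and $\dd{\bfx}$ along the known particle trajectory. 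The boundary terms of that time integration by parts must be matched: the term at $t=T$ is designed to be absorbed by the final condition~(\ref{final:cond})---which is exactly why its right-hand side is $-\lbra\bfna G,\bfrh\rbra_{\!\omega(T)}$---while the term at $t=0$ vanishes by the initial condition $\dd{\bfxr}(\cdot,0)=\bfze$ from~\eqref{beta:init}. I would therefore carry out the time integration by parts first, use~(\ref{adj:weak}d) and~(\ref{final:cond}) to dispose of the $\dd{\bfx}$-dependent particle contributions, and only then perform the instantaneous pairing cancellations at each fixed $t$.

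Finally, I would collect the surviving explicit terms and reconcile them with the stated form~\eqref{dd:J:gen:lm2}. The $\lbra\bfE((\bfu,p),(\bfuh,\hatp)),\bfna\Tsup\bfth\rbra_{\OO}$, $\lbra\dd{\bfu}{}\Dsup,\bffh\rbra_{\G}$, and $\lbra H(u_1),\del{2}\theta_2\rbra_{\G_L}$ terms should appear directly. The wall density term requires a small rearrangement: the raw leftovers contribute $\iG\lsqb F^1 + F\DivS\bfth\rsqb\ds$ from~\eqref{dJ:gen} together with a $-\lbra\del{\bff}F\sip\bff,\DivS\bfth\rbra_{\G}$ piece generated when the $\lbra\bff,\bfuh\DivS\bfth\rbra_{\G}$ term from~(\ref{der:weak}a) is combined with the adjoint Dirichlet condition~(\ref{adj:weak}b) (which identifies the boundary trace of $\bffh$ via $\del{\bff}F$ and $\bfuh|_{\G}$); assembling these yields the bracket $\lsqb F - \del{\bff}F\sip\bff\rsqb\DivS\bfth$ appearing in~\eqref{dd:J:gen:lm2}. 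Throughout, I would invoke Lemma~\ref{DivS:theta} to set $\DivS\dd{\bfx}=0$ on $\gamma$ and the divergence-free extension property so that no spurious particle-boundary or particle-interior terms remain, leaving exactly the claimed adjoint-based expression.
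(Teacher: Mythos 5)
Your proposal follows essentially the same route as the paper's proof: cross-test the derivative problem~\eqref{der:weak} with the adjoint fields and the adjoint problem~\eqref{adj:weak} with the derivative fields, cancel the interior pairings by symmetry of $a$ and $\bfE$, and handle the particle coupling by recognizing $\lbra \dd{\bfx},\bdot{\bfhh}\rbra_{\!\gamma(t)}+\lbra(\dd{\bfx}^{\Bdot}\!\!\!),\bfhh\rbra_{\!\gamma(t)}$ as an exact particle time derivative whose $t\sheq T$ endpoint is absorbed by the final condition~\eqref{final:cond} and whose $t\sheq0$ endpoint vanishes by~\eqref{beta:init}. The only flaw is a bookkeeping one in two test-function assignments: to eliminate the term $\lbra\dd{\bfh},\bfuh\rbra_{\!\gamma(t)}$ coming from (\ref{der:weak}a) you must take $\bfrh\sheq\bfxh$ in (\ref{der:weak}d) and $\bfk\sheq\dd{\bfh}$ in (\ref{adj:weak}c), as the paper does, rather than $\bfrh\sheq\dd{\bfx}$ and $\bfk\sheq(\dd{\bfx}^{\Bdot}\!\!\!)$; with your choices that term survives, though fixing this does not alter the structure of the argument.
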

\begin{proof}
The test functions $(\bfv,q,\bfg,\bfk,\bfrh)$ are set to $(\bfuh,\hatp,\bffh,\bfhh,\bfxh)$ in the derivative problem~\eqref{der:weak} and to $(\dd{\bfu},\dd{p},\dd{\bff},\dd{\bfh},\dd{\bfx})$ in the adjoint problem~\eqref{adj:weak}, and the combination $(\ref{der:weak}a)+(\ref{der:weak}b)+(\ref{der:weak}c)+(\ref{der:weak}d)-(\ref{adj:weak}a)+(\ref{adj:weak}b)-(\ref{adj:weak}c)-(\ref{adj:weak}d)$ then evaluated (using $\bfuh=\del{\bff}G$ on $\G$, implied by~(\ref{adj:weak}b), along the way). This results in
\begin{multline} \label{aux4}
  \lbra \del{\bff}F,\dd{\bff} \rbra_{\G} + \lbra \del{u_1}H,\dd{u_1} \rbra_{\G_L} 
 = \lbra\bfE\lpar(\bfu,p),(\bfuh,\hatp)\rpar,\bfna\Tsup\!\bfth \rbra_{\OO(t)} + \lbra\dd{\bfu}{}\Dsup,\bffh\rbra_{\G} \\[-1ex]
%  + \lbra \text{d}_t(\del{\bfw}G),\dd{\bfx} \rbra_{\!\omega(t)}
  - \lbra \del{\bff}F,\bff\DivS\bfth \rbra_{\G}
  + \lbra \dd{\bfx},\bdot{\bfhh}\rbra_{\!\gamma(t)} + \lbra (\dd{\bfx}^{\Bdot}\!\!\!),\bfhh\rbra_{\!\gamma(t)},
\end{multline}
which we then use in expression~\eqref{dJ:gen} of $J'(\G;\bfth)$ to obtain
\begin{align}
  J'(\G;\bfth)
 &= \lbra \bfna G,\dd{\bfx} \rbra_{\!\omega(T)}
  + \iT \Lcb \lbra\bfE\lpar(\bfu,p),(\bfuh,\hatp)\rpar,\bfna\Tsup\!\bfth \rbra_{\OO(t)}
  + \lbra\dd{\bfu}{}\Dsup,\bffh\rbra_{\G} + \lbra H(u_1),\del{2}\theta_2 \rbra_{\G_L} \suite\quad
  + \iG \lcb F^1(\bff,\G,\bfth) + \lsqb F(\bff,\G) \shm \del{\bff}F \sip \bff \rsqb\,\DivS\bfth \rcb \ds
%  \suite[-0.5ex]\quad
%  + \lbra \text{d}_t(\del{\bfw}G),\dd{\bfx} \rbra_{\!\omega(t)}
%  + \lbra \del{\bfw}G(\bfw),(\dd{\bfx}^{\Bdot}\!\!\!) \rbra_{\!\omega(t)}
  + \lbra \dd{\bfx},\bdot{\bfhh}\rbra_{\!\gamma(t)} + \lbra(\dd{\bfx}^{\Bdot}\!\!\!),\bfhh\rbra_{\!\gamma(t)} \Rcb \dt
\label{dJ:gen:2}
\end{align}
Then, we observe that the last two terms in the above formula combine to an exact particle time derivative (by virtue of the differentiation identity~(\ref{dd:IVS}b) wherein $\eta$ and $\bfth$ are replaced with the physical time $t$ and particle velocity $\bdot{\bfx}$, and recalling that $\DivS\bdot{\bfx}\sheq0$):
\begin{equation}
\begin{multlined} \label{aux5}
   \iT \Lcb \lbra % \text{d}_t(\del{\bfw}G),\dd{\bfx} \rbra_{\!\omega(t)}
%  + \lbra \del{\bfw}G,(\dd{\bfx}^{\Bdot}\!\!\!) \rbra_{\!\omega(t)} +
   \lbra \dd{\bfx},\bdot{\bfhh}\rbra_{\!\gamma(t)} + (\dd{\bfx}^{\Bdot}\!\!\!),\bfhh\rbra_{\!\gamma(t)} \rcb \dt %\\
 = \iT \der{}{t}\lbra \dd{\bfx},\bfhh\rbra_{\!\gamma(t)} \dt
 = \lbra \dd{\bfx},\bfhh\rbra_{\!\gamma} \Rabs^{t=T}_{t=0}
 = -\lbra \bfna G,\dd{\bfx} \rbra_{\!\omega(T)},
%  = \iT \der{}{t}\Lcb \lbra \del{\bfw}G,\dd{\bfx} \rbra_{\!\omega(t)}
%    + \lbra \dd{\bfx},\bfhh\rbra_{\!\gamma(t)} \Rcb \dt
%  = \Lpar \lbra \del{\bfw}G,\dd{\bfx} \rbra_{\!\omega(t)}
%    + \lbra \dd{\bfx},\bfhh\rbra_{\!\gamma} \Rpar \Rabs^{t=T}_{t=0}
\end{multlined}
\end{equation}
with the last equality resulting from the initial condition~(\ref{der:weak}e) and the final condition~(\ref{adj:weak}e). As a result, \eqref{dJ:gen:2} yields $J'(\G;\bfth)$ as claimed in the Lemma.
\end{proof}

\begin{remark}
The evolution equation~(\ref{adj:weak}d) and final condition~(\ref{adj:weak}e) are designed to achieve complete elimination from $J'(\G;\bfth)$ of the induced transformation velocity $\dd{\bfx}$ (featured among the unknowns of the derivative problem~\eqref{der:weak}); as a result (and as usual), the adjoint solution evolves backwards in time. We moreover observe that Lemma~\ref{lm2} crucially exploits the weak forms of the derivative and adjoint problems.\enlargethispage*{1ex}
\end{remark}

\subsubsection*{Boundary-only formulation of the shape derivative} Neither the adjoint problem~\eqref{adj:weak} nor the shape derivative expression provided by Lemma~\ref{lm2} can be directly used within a BIE framework, in both cases because of the domain integral terms involving $\bfE$. We now show that those terms can be reformulated as boundary integrals involving only quantities defined on $\G$ and $\GL$, thanks to the following identity:
\begin{lemma}\label{lm5}
Let $(\bfu,p)$ and $(\bfuh,\hatp)$ respectively satisfy $\Div\bfu\sheq0$, $-\Delta\bfu\shp\bfna p\sheq\bfze$ and $\Div\bfuh\sheq0$, $-\Delta\bfuh\shp\bfna\hatp\sheq\bfze$ in $\OO$. Assume that $\bfu$, $\bfuh$ and $p$ are periodic, and set $\Delta\hatp(x_2) := \hatp(L,x_2)-\hatp(0,x_2)$ (i.e. periodicity is not assumed for $\hatp$). Then, for any vector field $\bfzet\in C^{1,\infty}_0(\OO_{\text{all}})$, the following identity holds:
\begin{equation}
  \lbra\bfE\lpar(\bfu,p),(\bfuh,\hatp)\rpar,\bfna\Tsup\!\bfzet \rbra_{\OO(t)}
 = \int_{\G\cup\gamma(t)} \bfn\sip\bfE\lpar(\bfu,p),(\bfuh,\hatp)\rpar\sip\bfzet \ds + \int_{\G_L} \Delta\hatp \, (\del{2} u_1)\zeta_2 \ds. 
\end{equation}
Moreover, if the traces on $\gamma$ of $\bfu,\bfuh$ are rigid-body velocities with respective angular velocities $\varrho,\ooh$, we have
\begin{equation}
  \bfn\sip\bfE\lpar(\bfu,p),(\bfuh,\hatp)\rpar
 = -\lpar \ooh\bfh \shp \varrho\bfhh \rpar\sip\bfr - h_s\hath_s\,\bfn \qquad\text{on $\gamma$}, \label{E:gamma}
\end{equation}
where $\bfh\sheq\bfsig[\bfu,p]\sip\bfn$, $\bfhh\sheq\bfsig[\bfuh,\hatp]\sip\bfn$ and $\bfr=\bfe_2\tensor\bfe_1-\bfe_1\tensor\bfe_2=\bfn\shtimes\bftau-\bftau\shtimes\bfn$ (see~\eqref{rigid:velocity}).
\end{lemma}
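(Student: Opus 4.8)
The plan is to obtain both identities from the divergence theorem, the decisive fact being that $\bfE$ is divergence-free. First I would write the duality product in components and integrate by parts, giving
\[
  \lbra\bfE,\bfna\Tsup\bfzet\rbra_{\OO(t)}
 = \int_{\dO(t)} n_i E_{ij}\,\zeta_j \ds - \int_{\OO(t)} (\del{i}E_{ij})\,\zeta_j \dV ,
\]
where $\dO(t)=\G\cup\G_0\cup\GL\cup\gamma(t)$ and $n_iE_{ij}=(\bfn\sip\bfE)_j$. The heart of the proof is to show that $\Div\bfE=\del{i}E_{ij}=0$, i.e. that $\bfE$ is the Stokes analogue of a divergence-free Eshelby tensor. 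I would establish this by differentiating~\eqref{E:expr} term by term (the $(\Div\bfu)\bfI$ and $(\Div\bfuh)\bfI$ contributions dropping out, both fields being solenoidal) and invoking the two sets of field equations. Two distinct cancellations occur: using $\del{i}\bfD[\bfu]_{ik}=\tfrac12\Delta u_k=\tfrac12\del{k}p$ (from $\Div\bfu=0$ and $-\Delta\bfu+\bfna p=\bfze$), the terms generated by $\bfD[\bfu]\sip\bfna\bfuh$ and $\bfD[\bfuh]\sip\bfna\bfu$ cancel those generated by $p\,\bfna\bfuh$ and $\hatp\,\bfna\bfu$; and by the symmetry $\bfD[\bfu]_{kl}=\bfD[\bfu]_{lk}$ together with commuting partials, the ``energy-gradient'' terms $\del{j}(\bfD[\bfu]\dip\bfD[\bfuh])$ cancel the second-derivative terms $\bfD[\bfu]_{ik}\del{i}\del{j}\hatu_k$ and $\bfD[\bfuh]_{ik}\del{i}\del{j}u_k$. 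This leaves $\Div\bfE=\bfze$ exactly.

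The remaining boundary term over $\G\cup\gamma(t)$ is already of the desired form, so it only remains to reduce the end-section contribution over $\G_0\cup\GL$. Here I would invoke periodicity: $\bfu,\bfuh,p$ and $\bfzet$ are periodic, hence so are $\bfD[\bfu],\bfD[\bfuh],\bfna\bfu,\bfna\bfuh$, while only $\hatp$ carries the jump $\Delta\hatp$. As the outward normals on $\G_0$ and $\GL$ are $\mp\bfe_1$, every periodic contribution to $n_iE_{ij}\zeta_j$ cancels between the two end sections and only the term $\hatp\,\bfna\bfu$ survives; using in addition that $\zeta_1$ vanishes on the end sections (as it does for all transformation velocities admitted here) collapses the surviving integrand to $\Delta\hatp\,(\del{2}u_1)\zeta_2$ on $\GL$, which is the stated remainder.

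For the pointwise formula~\eqref{E:gamma} I would work in the local frame $(\bftau,\bfn)$ and decompose $\bfna\bfu=\del{s}\bfu\tensor\bftau+\del{n}\bfu\tensor\bfn$, with $\del{n}\shdeq\bfn\sip\bfna$. Two structural facts make the computation collapse. Since the trace is a rigid-body velocity, $\del{s}\bfu=\varrho\,\bfe_3\shtimes\bftau=\varrho\bfn$, so the tangential strain $\bfD[\bfu]_{\tau\tau}=\DivS\bfu$ vanishes (Lemma~\ref{DivS:theta}); incompressibility $\bfD[\bfu]_{\tau\tau}+\bfD[\bfu]_{nn}=\Div\bfu=0$ then forces the normal strain $\bfD[\bfu]_{nn}$ to vanish as well. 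Hence $\bfD[\bfu]$ reduces to the pure shear $\tfrac12 h_s(\bftau\tensor\bfn+\bfn\tensor\bftau)$ with $h_s=\bfh\sip\bftau$, and $h_n=\bfh\sip\bfn=-p$ (and likewise for the hatted fields). I would then substitute these into each term of $\bfn\sip\bfE$, read off its $\bftau$- and $\bfn$-components, and identify the outcome using $\bfg\sip\bfr=(\bfg\sip\bfn)\bftau-(\bfg\sip\bftau)\bfn$: the $\bftau$-component assembles into $-(\ooh h_n+\varrho\hath_n)$ and the $\bfn$-component into $\ooh h_s+\varrho\hath_s-h_s\hath_s$, which is exactly $-(\ooh\bfh+\varrho\bfhh)\sip\bfr-h_s\hath_s\bfn$.

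I expect the divergence identity $\Div\bfE=\bfze$ to be the main obstacle, since it is the only step that simultaneously uses both field equations and requires careful index bookkeeping to disentangle the two separate cancellation mechanisms. By comparison the end-section reduction is routine periodicity accounting, and the reduction on $\gamma$ becomes immediate once the vanishing of both strain components $\bfD[\bfu]_{\tau\tau}$ and $\bfD[\bfu]_{nn}$ has been recognised.
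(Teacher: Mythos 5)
Your proof is correct and follows essentially the same route as the paper: establish that $\bfE$ is divergence-free from the two Stokes systems, apply the divergence theorem, cancel the periodic end-section contributions so that only the $\hatp\,\bfna\bfu$ term survives (using $\zeta_1=0$ there), and reduce $\bfn\sip\bfE$ on $\gamma$ by exploiting the vanishing of both diagonal strain components for an incompressible field with rigid-body trace. The only cosmetic difference is that you compute $\Div\bfE=\bfze$ for the mixed bilinear tensor directly by index bookkeeping, whereas the paper verifies it only for the quadratic case $\bfE\lpar(\bfu,p),(\bfu,p)\rpar$ and extends to the mixed case via the polarization identity.
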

\begin{proof}
See Appendix~\ref{lm5:proof}.
\end{proof}

Lemma~\ref{lm5} is first applied, with $\bfzet\sheq\bfrh$, to the term $\lbra\bfE\lpar(\bfu,p),(\bfuh,\hatp)\rpar,\bfna\Tsup\!\bfrh \rbra_{\OO(t)}$ in the adjoint evolution equation~(\ref{adj:weak}d), in which case the velocity fields $\bfu$ and $\bfuh$ both have rigid-body traces on $\gamma(t)$ while $\bfrh$ can be safely assumed to verify $\text{supp}(\bfrh)\cap\G=\emptyset$. The evolution equation~(\ref{adj:weak}d) thus becomes
\begin{equation}
  -\lbra \bdot{\bfhh},\bfrh \rbra_{\!\gamma(t)}
 + \lbra \bfn\sip\bfE\lpar(\bfu,p),(\bfuh,\hatp)\rpar,\bfrh \rbra_{\gamma(t)} = 0 \qquad \forall\bfrh\shin\RS, \label{evol:weak}
\end{equation}
with $\bfn\sip\bfE$ given by~\eqref{E:gamma}, allowing the adjoint problem~\eqref{adj:weak} to be recast in BIE form.\enlargethispage*{1ex}

We then evaluate $\lbra\bfE\lpar(\bfu,p),(\bfuh,\hatp)\rpar,\bfna\Tsup\!\bfth \rbra_{\OO(t)}$ in expression~\eqref{dd:J:gen:lm2} of $J'(\G;\bfth)$ by means of Lemma~\ref{lm5} applied (with $\bfzet\sheq\bfrh$) to the solutions of the forward problem~\eqref{forward:weak} and the adjoint problem~\eqref{adj:weak} (for which $\Delta \hatp=\del{u_1}H$). Observing along the way that
\begin{equation}
  \int_{\G_L} \del{u_1}H(u_1)\; (\del{2} u_1)\theta_2 \ds
 + \int_{\G_L} H(u_1) \, \del{2}\theta_2 \ds
 = \iGL \del{2}\lpar H(u_1)\theta_2 \rpar \ds,
\end{equation}
the shape derivative of $J$ is recast in the following form, without domain integrals:
\begin{align}
  J'(\G;\bfth)
 &= \iT \Lcb \iG \Lpar \bffh\sip\dd{\bfu}{}\Dsup + \bfn\sip\bfE\lpar(\bfu,p),(\bfuh,\hatp)\rpar\sip\bfth \Rpar \ds
  + \iGL \del{2}\lpar H(u_1)\theta_2 \rpar \ds \suite\qquad 
  + \iG \Lpar F^1(\bff,\G,\bfth) + \lsqb F(\bff,\G) \shm \del{\bff}F(\bff,\G) \sip \bff \rsqb\,\DivS\bfth \Rpar \ds \Rcb \dt \label{dd:J:gen:G}
\end{align}

Expression~\eqref{dd:J:gen:G} is still somewhat inconvenient for use in a BIE framework as it involves (through $\bfD[\bfu]$ and $\bfD[\bfuh]$ in $\bfE$) the complete velocity gradient on $\G$. This can be alleviated by reformulating the latter in terms of tractions and tangential derivatives of velocities, eliminating normal derivatives of velocities by means of the constitutive relation~(\ref{forward:PDE}b). This step is here implemented through the following explicit auxiliary identity, established (in Appendices~\ref{grad:curv} and~\ref{proof:lemma5}) using curvilinear coordinates:
\begin{align}
\lefteqn{
  \bffh\sip\dd{\bfu}{}\Dsup + \bfn\sip\bfE\lpar(\bfu,p),(\bfuh,\hatp)\rpar\sip\bfth
 + \lsqb F(\bff,\G) \shm \del{\bff}F(\bff,\G) \sip \bff \rsqb\,\DivS\bfth }
 & \label{forward:wall} \\[-0.5ex] & \mbox{}\hspace{5pt}
 = d_s \lpar \lsqb F(\bff,\G) \shm \del{\bff}F(\bff,\G) \sip \bff \rsqb\theta_s \rpar
  + \dd{\ell}\fhat_s - \ell(\del{s}\theta_n)\fhat_n
 + \kappa\ell \fhat_s\theta_n - \lpar\del{s} F\rpar\theta_s
  \suite\quad
  + \Lpar \lsqb f_s\bfn - p\bftau \rsqb\sip\del{s}\bfuh - f_s\fhat_s
  - \kappa\lsqb F(\bff,\G) \shm \del{\bff}F(\bff,\G) \sip \bff \rsqb \Rpar\theta_n,
\end{align}
where $\del{s} F$ indicates the partial derivative w.r.t. $s$ of $F(\bff,\G)$ (with $\bff$ frozen) while $\text{d}_s$ denotes a total derivative w.r.t. $s$.\enlargethispage*{1ex}
We now use the above identities into~\eqref{dd:J:gen:G}. Since $\text{d}_s \lpar \lsqb F \shm \del{\bff}F\sip\bff \rsqb\theta_s \rpar \ds$ integrates to zero over $\G$ by virtue of the spatial periodicity of the forward solution and requirement (i) of~\eqref{Theta:def}, we obtain the following final result for $J'(\G;\bfth)$, suitable for direct implementation using the output of a BIE solver:
\begin{proposition}\label{dJ:final}
The shape derivative of any cost functional $J$ of the form~\eqref{J:gen:def} in a shape perturbation whose transformation velocity field $\bfth$ satisfies assumptions~\eqref{Theta:def} is given (with $f_s\shdeq\bff\sip\bftau$, $\fhat_s\shdeq\bffh\sip\bftau$) by
\begin{align}
  J'(\G;\bfth)
 &= \iT \Lcb \iG \Lpar F^1 - \lpar\del{s}F\rpar\theta_s
    + \dd{\ell} \fhat_s - \ell(\del{s}\theta_n)(\hatp\shp2\bftau\sip\del{s}\bfuh) \Rpar \ds
      + \iGL \del{2}\lpar H(u_1)\theta_2 \rpar \ds \suite\qquad
    + \iG \Lpar (f_s\bfn \shm p\bftau)\sip\del{s}\bfuh + \kappa\ell\fhat_s - f_s\fhat_s
    - \kappa\lsqb F \shm \del{\bff}F \sip \bff \rsqb \Rpar \theta_n \ds.
%    \suite\quad
%    + \iGL \del{2}\lpar H(u_1)\theta_2 \rpar \ds \Rcb \dt. \vspace*{2ex} \label{dd:J:gen:result}
\end{align}
\end{proposition}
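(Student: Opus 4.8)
The plan is to begin from the boundary-only expression~\eqref{dd:J:gen:G} of $J'(\G;\bfth)$, substitute the wall identity~\eqref{forward:wall}, discard an exact tangential derivative by periodicity, and regroup the survivors according to their dependence on $\theta_n$, $\theta_s$ and $\del{s}\theta_n$. It is worth first recalling how~\eqref{dd:J:gen:G} arises, as this pins down what must be simplified. The expression~\eqref{dd:J:gen:lm2} from Lemma~\ref{lm2} contains the single domain term $\lbra\bfE\lpar(\bfu,p),(\bfuh,\hatp)\rpar,\bfna\Tsup\!\bfth\rbra_{\OO(t)}$, to which I apply Lemma~\ref{lm5} with $\bfzet\sheq\bfth$. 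Because the particle is assumed never to touch the wall, $\bfth$ may be chosen with support disjoint from $\oo(t)$, so the $\gamma(t)$ contribution of Lemma~\ref{lm5} drops and only integrals on $\G$ and $\GL$ remain. On $\GL$ the adjoint solution obeys $\Delta\hatp\sheq\del{u_1}H$, so the term $\iGL\Delta\hatp\,(\del{2}u_1)\theta_2\ds$ produced by Lemma~\ref{lm5} merges, via the product rule, with the pre-existing $\lbra H(u_1),\del{2}\theta_2\rbra_{\GL}$ into the exact derivative $\iGL\del{2}\lpar H(u_1)\theta_2\rpar\ds$, which is precisely the $\GL$-structure of~\eqref{dd:J:gen:G}.

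The substance lies on $\G$, where the integrand $\bffh\sip\dd{\bfu}{}\Dsup + \bfn\sip\bfE\lpar(\bfu,p),(\bfuh,\hatp)\rpar\sip\bfth + \lsqb F\shm\del{\bff}F\sip\bff\rsqb\DivS\bfth$ still carries the full velocity gradients of $\bfu$ and $\bfuh$, both through $\bfE$ and through the Dirichlet-data derivative $\dd{\bfu}{}\Dsup$ of~\eqref{lm4}. The crucial manipulation is to trade the \emph{normal} velocity derivatives for tractions and \emph{tangential} velocity derivatives. I would expand every term in the local $(\bftau,\bfn)$ frame using the Frenet relations~\eqref{frenet} and the surface-divergence formula~\eqref{DivS:def}, then invoke the constitutive law~\eqref{constitutive} together with incompressibility $\Div\bfu\sheq\Div\bfuh\sheq0$: the latter ties the normal--normal strain rate $\bfn\sip\bfD[\bfuh]\sip\bfn$ to $\bftau\sip\del{s}\bfuh$, which is exactly what re-expresses the normal traction data through the tangential combination $\hatp\shp2\bftau\sip\del{s}\bfuh$ appearing in the final formula and removes the normal derivatives hidden inside $\bfn\sip\bfE$. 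This is the content of the auxiliary identity~\eqref{forward:wall}, established in the appendices; I expect this curvilinear computation --- simultaneously isolating the exact derivative $\text{d}_s\lpar\lsqb F\shm\del{\bff}F\sip\bff\rsqb\theta_s\rpar$ and cleanly separating the $\theta_n$ and $\theta_s$ contributions --- to be the one genuinely delicate step.

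Granting~\eqref{forward:wall}, what remains is bookkeeping. Substituting it into the $\G$-integrand of~\eqref{dd:J:gen:G} turns the latter into $F^1$ plus the right-hand side of~\eqref{forward:wall}. Upon integrating over $\G$, the exact tangential derivative $\text{d}_s\lpar\lsqb F\shm\del{\bff}F\sip\bff\rsqb\theta_s\rpar$ integrates to zero: the bracket is built from $F(\bff,\G)$ and $\bff$, which are periodic because the forward solution is, while $\theta_s$ is periodic by requirement~(i) of~\eqref{Theta:def}, so the integrand is periodic and the total $s$-derivative of a periodic quantity has vanishing integral over each closed wall component. Collecting the survivors --- the explicit-shape term $F^1$, the term $-(\del{s}F)\theta_s$, the term $\dd{\ell}\fhat_s$ (with $\dd{\ell}$ the scalar of~\eqref{lm4} acting as a constant multiplier), the term $-\ell(\del{s}\theta_n)(\hatp\shp2\bftau\sip\del{s}\bfuh)$ inherited from the replaced normal traction, and the $\theta_n$-group $\lpar(f_s\bfn\shm p\bftau)\sip\del{s}\bfuh+\kappa\ell\fhat_s-f_s\fhat_s-\kappa\lsqb F\shm\del{\bff}F\sip\bff\rsqb\rpar$ obtained by folding the separate $\kappa\ell\fhat_s\theta_n$ term of~\eqref{forward:wall} into its $\theta_n$ part --- together with the already-assembled $\GL$ integral, reproduces the stated expression. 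Beyond Lemma~\ref{lm5} and identity~\eqref{forward:wall}, the only point needing explicit justification is thus the periodicity cancellation of the exact derivative.
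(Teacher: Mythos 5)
Your proposal follows the paper's own route essentially verbatim: Lemma~\ref{lm2}, then Lemma~\ref{lm5} applied with $\bfzet\sheq\bfth$ (the $\gamma(t)$ contribution suppressed by the support assumption on $\bfth$), the product-rule merging of the $\G_L$ terms into $\iGL\del{2}\lpar H(u_1)\theta_2\rpar\ds$, substitution of the curvilinear identity~\eqref{forward:wall}, and cancellation of the exact tangential derivative by periodicity of the forward solution and requirement~(i) of~\eqref{Theta:def}. The argument is correct, and the one genuinely delicate step you defer --- the derivation of~\eqref{forward:wall} --- is likewise deferred to the appendices in the paper.
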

We now apply Proposition~\ref{dJ:final} to the specific functionals introduced in Section~\ref{optim:pb}.

\subsection{Sensitivity results for functionals involved in pumping problem}
\label{dd:explicit}

The adjoint state solving the weak formulation~\eqref{adj:weak} satisfies the incompressible Stokes equations with periodicity conditions
\begin{subequations}
\begin{equation}
  -\Delta\bfuh + \bfna\hatp = \bfze, \quad \Div\bfuh = 0 \quad \text{in $\OO(t)$}, \qquad \bfuh|_{\G_L} = \bfuh|_{\G_0}, \label{adjoint:PDE}
\end{equation}
the fluid domain $\OO(t)$ and particle configuration $\gamma(t)$ being those determined by the forward problem. Moreover, the adjoint fluid motion results from the velocity being prescribed by
\begin{equation}
  \bfuh = \bfxh \qquad \text{on $\gamma(t)$} \label{adjoint:PDE:particle}
\end{equation}
on the particle, and by 
\begin{equation}
  \bfuh = \del{\bff}F \qquad \text{on $\G$}, \label{adjoint:PDE:wall}
\end{equation}
on the wall, as well as the pressure drop being prescribed as
\begin{equation}
  \hatp\mid_{\G_L} - \hatp\mid_{\G_0} = \del{u_1}H. \label{adjoint:PDE:drop}
\end{equation}
Moreover, $\bfhh$ and $\bffh$ in the weak adjoint problem~\eqref{adj:weak} are the stress vectors arising from the enforcement (as equality constraints) of the BCs~\eqref{adjoint:PDE:particle} and~\eqref{adjoint:PDE:wall}; in particular, $\bfhh\sheq\bfsig[\bfuh,\hatp]\sip\bfn$ on $\gamma(t)$ and $\bffh\sheq\bfsig[\bfuh,\hatp]\sip\bfn$ on $\G$.

Equations~(\ref{adjoint:PDE}-d) are the strong-form counterparts of equations~(\ref{adj:weak}a-c), and define a well-posed problem in case $\bfxh$ is given. However, like $\bfxr$ in the forward problem, $\bfxh$ is unknown. This is compensated by the fact that $\bfhh$ must satisfy additional requirements, namely the evolution equation~(\ref{adj:weak}d) and the final condition~(\ref{adj:weak}e). The strong form of the evolution equation is
\begin{equation}
\begin{aligned}
  \lbra \bdot{\bfhh} ,1 \rbra_{\gamma(t)}
 &= -\lbra \lpar\varrho\bfhh\sip\bfr \shp h_s\hath_s\,\bfn\rpar\,,1\rbra_{\gamma(t)}, \\[-1ex]
  \lbra \bdot{\bfhh},\bfe_3\shtimes\bfx \rbra_{\gamma(t)}
 &= \lbra \lpar\ooh\bfh \shp \varrho\bfhh \shm h_s\hath_s\,\bftau\rpar\,,\bfx \rbra_{\gamma(t)}
\end{aligned} \qquad t\shin[0,T]. \label{adjoint:evolution}
\end{equation}
(having invoked~\eqref{E:gamma} and used that $\bfr\sip(\bfe_3\shtimes\bfx)=-\bfx$, $\bfn\sip(\bfe_3\shtimes\bfx)=\bftau\sip\bfx$ and, by virtue of~(\ref{forward:weak}d), $\lbra \bfh,1\rbra_{\gamma}\sheq0$) for the evolution equation, while that of the final condition reads
\begin{equation}
\begin{aligned}
  \lbra \bfhh(\cdot,T),1 \rbra_{\gamma(T)} &= - \lbra \bfna G,1 \rbra_{\oo(T)}, \\
  \lbra \bfhh(\cdot,T),\bfe_3\shtimes\bfx \rbra_{\gamma(T)} &= -\lbra \bfna G, \bfe_3\shtimes\bfx \rbra_{\oo(T)}.
\end{aligned}
\label{adjoint:final}
\end{equation}
\end{subequations}
Equations~(\ref{adjoint:PDE}-f) together constitute the strong form of the weak adjoint problem~\eqref{adj:weak}. Equations~\eqref{adjoint:PDE:particle}, \eqref{adjoint:PDE:drop} and~\eqref{adjoint:final} depend on the objective function being considered, whereas equations~(\ref{adjoint:PDE},b,e) do not.

\subsubsection*{Shape derivative of dissipation functional}

The dissipation functional $J\Wsub$, defined by~\eqref{J:loss:def}, is a particular instance of~\eqref{J:gen:def} with $F(\bff,\G)=(\bfuD\shp \bfe_1)\sip\bff$ and $G\sheq H\sheq0$. In particular, we have
\begin{equation}
  \del{s} F =\lpar\del{s}\bfuD\rpar\sip\bff = -\kappa\ell p, \qquad
  \del{\bff}F\sheq\bfuD\shp \bfe_1,
\end{equation}
from which we find $\del{s}\bfuh = (\kappa\ell)\bfn$. We also have $F \shm \bff\sip\del{\bff}F=0$. Finally, $F$ depends on $\G$ through $\bfuD$ given by~\eqref{noslip:2D}, so that $F^1=\dd{\bfu}{}\Dsup\sip\bff$, which in turn yields
\begin{equation}
  F^1\shm\del{s} F
 = \dd{\ell}f_s - \kappa\ell\del{s}\theta_n
\end{equation}
with the help of~\eqref{lm4}. The adjoint solution is governed in strong form by equations~\eqref{adjoint:PDE} to~\eqref{adjoint:final} particularized to the case of $J\Wsub$, i.e. the velocity on the particle and the pressure drop are prescribed as
\begin{equation}
  \bfuh = \bfu\Dsup\shp \bfe_1 \quad \text{on $\G$}, \qquad \hatp\mid_{\G_L}\shm\hatp\mid_{\G_0} = 0.
\end{equation}
while the final conditions~\eqref{adjoint:final} on $\bfhh$ are (since $G\sheq0$) homogeneous:
\begin{equation}
  \lbra \bfhh(\cdot,T),1 \rbra_{\gamma(T)} = 0, \qquad
  \lbra \bfhh(\cdot,T),\bfe_3\shtimes\bfx \rbra_{\gamma(T)} = 0 \label{adjoint:final:hom}
\end{equation}
Applying Proposition~\ref{dJ:final} to this case, the shape derivative of $J\Wsub$ is therefore obtained (upon evaluation of $\dd{\bfu}{}\Dsup\sip\bff$ with~\eqref{lm4}) as\enlargethispage*{1ex}
\begin{equation} \label{dd:Jw}
  J'\Wsub(\G;\bfth)
 = \iT \!\! \iG \Lcb \Lsqb \kappa\ell(f_s\shp\fhat_s) - f_s\fhat_s \Rsqb\theta_n
     + \dd{\ell}(f_s\shp\fhat_s) - \ell(\del{s}\theta_n)(p\shp\hatp) \Rcb \ds \dt 
\end{equation}

\subsubsection*{Shape derivative of net particle motion} The net particle motion $D(\G)$, defined by~\eqref{xG:def}. is another shape functional of the form~\eqref{J:gen:def}, with $G(\bfx)= x_1/|\oo|$ and $F\sheq H\sheq0$. The adjoint problem in strong form still consists of equations~\eqref{adjoint:PDE} to~\eqref{adjoint:final}, whose particularization for $D(\G)$ results in vanishing entails setting to zero the velocity on the particle and the pressure:
\begin{equation}
  \bfuh = \bfze \quad \text{on $\G$}, \qquad \hatp\mid_{\G_L}\shm\hatp\mid_{\G_0} = 0,
\end{equation}
while the final conditions~\eqref{adjoint:final} become
\begin{equation}
  \text{(a) \ }\lbra \bfhh(\cdot,T),1 \rbra_{\gamma(T)} = -\bfe_1, \qquad 
  \text{(b) \ }\lbra \bfhh(\cdot,T),\bfe_3\shtimes\bfx \rbra_{\gamma(T)} = x_2\Gsup(T) \label{adjoint:final:centroid}
\end{equation}
We note that conditions~\eqref{adjoint:evolution} and~\eqref{adjoint:final:centroid}, as well as the definition of $\bfhh$ as a traction vector, assume the orientation convention of Fig.~\ref{geom:2D} on $\gamma(t)$ while $\bfx$ is the absolute vector position in~(\ref{adjoint:final:centroid}b). The derivative $D'(\G;\bfth)$ of $D(\G)$ is found from Proposition~\ref{dJ:final} to be given by the right-hand side of~\eqref{dd:cQ:result} without the contributions of $\G_L$, i.e.:\enlargethispage*{1ex}
\begin{equation}
  D'(\G;\bfth)
 = \int_0^{T}\!\! \iG \Lcb \lpar \ell\kappa\fhat_s - f_s\fhat_s \rpar \theta_n
     + \dd{\ell} \fhat_s - \ell(\del{s}\theta_n)\hatp \Rcb \ds \dt \label{dd:D}
\end{equation}

\subsubsection*{Shape derivative of mass flow rate functional}

The shape derivative of the time-averaged mass flow functional $C(\G)$ defined by~\eqref{flowrate:expr} is given by
\begin{equation}
  Q'(\G;\bfth)
 = - |\oo|\lbra D'(\G),\bfth \rbra + \lbra |\OO|',\bfth \rbra + \lbra C'(\G),\bfth \rbra,
\end{equation}
with the first two derivatives respectively given by~\eqref{dd:D} and~\eqref{dd:volume}, so that we only need to focus on the evaluation of $C'(\G;\bfth)$. $C(\G)$, defined in~\eqref{flowrate:expr}, is a shape functional of the form~\eqref{J:gen:def}, with $F\sheq G\sheq 0$ and $H(u_1)\sheq u_1/T$. The adjoint solution associated with $C(\G)$ therefore solves problem~\eqref{adjoint:PDE}--\eqref{adjoint:final} with the above-specified $F,G,H$, so that~(\ref{adjoint:PDE:particle},d) become
\begin{equation}
  \bfuh = \bfze \quad \text{on $\G$}, \qquad \hatp\mid_{\G_L}\shm\hatp\mid_{\G_0} = 1/T
\end{equation}
and the homogeneous final conditions~\eqref{adjoint:final:hom} again apply.
The shape derivative of $C(\G)$ is finally found from Proposition~\ref{dJ:final} to be given by
\begin{align} \label{dd:cQ:result}
  C'(\G;\bfth)
 &= \iT \!\! \iG \Lcb \lpar \ell\kappa\fhat_s - f_s\fhat_s \rpar \theta_n + \dd{\ell} \fhat_s
    - \ell(\del{s}\theta_n)\hatp \Rcb \ds \dt \suite\qquad\qquad
   + \inv{T}\iT \lsqb \lpar u_1\theta_2 \rpar(\bfz^+,\cdot) - \lpar u_1\theta_2 \rpar(\bfz^-,\cdot) \rsqb \dt
\end{align}

% ------------------------------------------------------------------------
\section{Numerical scheme}\label{sc:numerical}
In this section, we describe our numerical solvers for the shape optimization problem~\eqref{eq:opt} that employ the shape sensitivity formulas derived in the previous section. 
\subsection{Optimization method}\label{sec:optmethod}

To avoid second-order derivatives of the cost functional, whose evaluation is somewhat challenging in our case, we solve the shape optimization problem~\eqref{eq:opt} using an augmented Lagrangian (AL) approach and Broyden-Fletcher-Goldfarb-Shanno (BFGS) algorithm. 
An augmented Lagrangian $\Lcal\Asub$ is defined by
\begin{equation}
  \Lcal\Asub(\OO,\lambda;\sigma)
 = J\Wsub(\OO) - \lambda_1 C\Vsub(\OO)  - \lambda_2 C\Dsub(\OO)+ \frac{\sigma}{2} [ C^2\Vsub(\OO)+C^2\Dsub(\OO)],
\label{eq:augla}\end{equation}
where $\sigma$ is a positive penalty coefficient and  $\lambda = (\lambda_1, \lambda_2)$ are Lagrange multipliers. Setting the initial values $\sigma^0$ and $\lambda^0$ using heuristics, the AL method introduces a sequence $(m= 1, 2,\dots)$ of unconstrained minimization problems:
\begin{equation}
 \OO_m =  \argmin_{\OO \, \shin\, \Ocal}  \Lcal\Asub(\OO,\lambda^m;\sigma^m) ,
\label{eq:minsubp}\end{equation}
with explicit Lagrange multiplier estimates $\lambda^m$ and increasing penalties $\sigma^m$.
We use the BFGS algorithm \cite{NoceWrig06}, a quasi-Newton method, for solving~\eqref{eq:minsubp}.  Equations~\eqref{dd:volume}, \eqref{dd:Jw} and \eqref{dd:D} are used in this context for gradient evaluations in the line search method. The overall optimization procedure for problem~\eqref{eq:opt} is summarized in the following algorithm:

\noindent\rule{\textwidth}{0.5pt}
\begin{enumerate}
    \item[1:] Choose initial fluid region $\OO_0$
    \item[2:] Set convergence tolerance $\zeta^{\star}$, $\lambda^0$, $\sigma^0$, and $\zeta^1= (\sigma^0)^{-0.1}$
    \item[3:] \textbf{for} $m=1,2, \dots$ \textbf{do}
    \begin{enumerate}
        \item[(3-a):] Solve unconstrained minimization~\eqref{eq:minsubp} for $\OO_m$, {go to} (3-b)
        
        \item[(3-b):] \textbf{if} $\max(|C\Vsub(\OO_m)|,|C\Dsub(\OO_m)|) \shl \zeta^{m}$ \textbf{then} {go to} (3-c), \textbf{else} {go to} (3-e)
        
        \item[(3-c):] \textbf{if} $\max(|C\Vsub(\OO_m)|,|C\Dsub(\OO_m)|) \shl \zeta^{\star}$  \textbf{then} STOP and \textbf{return} $\OO^{\star}:=\OO_m$, \textbf{else} {go to} (3-d)
        
        \item[(3-d):] 
        \# update multiplier\\ $\lambda^{m}_1=\lambda^{m-1}_1-\sigma^{m-1} C\Vsub(\OO_m)$, \; $\lambda^{m}_2=\lambda^{m-1}_2-\sigma^{m-1} C\Dsub(\OO_m)$ \\
        $\sigma^{m}=\sigma^{m-1}$, $\zeta^{m+1}=(\sigma^m)^{-0.9}\zeta^m$  \\
        {go to} (3-a)

        \item[(3-e):] \# increase penalty \\ $\sigma^{m}=10\sigma^{m-1}$ \\
        $\lambda^{m}_1=\lambda^{m-1}_1, \lambda^{m}_2=\lambda^{m-1}_2$,  $\zeta^{m+1}=(\sigma^m)^{-0.1}$ \;   \\
        {go to} (3-a)
    \end{enumerate} 
\end{enumerate}
\noindent\rule{\textwidth}{0.5pt}

\subsection{Finite-dimensional parametrization of wall shapes}

We model the shape of the channel walls using B-splines. For an integer $k$, the  $k$-th cardinal B-spline basis function of degree $n$, denoted by $B_{k,n}$, is given by recurrence,
\begin{equation}
\begin{aligned}
\mathcal{B}_{k,0}(t) & = & \left\{\begin{array}{ll} 1, & k\le t < k\shp 1 \\ 0,& \mbox{otherwise}  \end{array} \right.\\
\mathcal{B}_{k,n} (t) & = & \frac{t-k}{n}\mathcal{B}_{k,n\shm 1} (t) + \frac{n\shp k\shp 1\shm t}{n} \mathcal{B}_{k+1,n-1}(t) 
\end{aligned}\label{eq:Bn}
\end{equation}
and $\mathcal{B}_{k,n}(t)$ has support $[k, k\shp n\shp 1]$.
Any $\mathcal{C}^{n-1}$ function $x(t)$ defined on $[0, M]$ with $M$ being a positive integer can be approximated by a linear combination of the form $ x(t) = \sum_{k=-n}^{M-1} \xi_{k} \mathcal{B}_{k,n}(t)$ with $\xi_{k}\in \mathbb{R}$ and $t\in [0,M]$. In this work, we use B-splines of degree $5$, i.e., $n=5$ in \eqref{eq:Bn}. To parametrize wall shapes $\boldsymbol{x}(t)$ for $t\in[0,2\pi]$, we define the basis functions $B_k(t) = \mathcal{B}_{k,5}(\frac{M}{2\pi} t)$, where $M$ is a pre-assigned positive integer of the discretization. The wall $\Gamma^{\pm}\ni\bfx^{\pm} = \bfx^{\pm}(t;\bfxi)$ is then written as
\begin{equation}\label{eq:bspline1}\left.
\begin{aligned}
& x_1^{\pm}(t) = x_1^{\pm}(t;\bfxi) = \frac{L}{2\pi}(2\pi\shm t) + \sum_{k=-5}^{M-1} \xi_{1,k}^{\pm} B_k (t), \\
& x_2^{\pm}(t) = x_2^{\pm}(t;\bfxi) = \phantom{(2\pi\shm t) + {}}\sum_{k=-5}^{M-1} \xi_{2,k}^{\pm} B_k (t),
\end{aligned}\right\} \quad  t \in [0, 2\pi],
\end{equation}
where $\bfxi$ is the vector of coefficients for $B_k$ with $(4M\shp 20)$ components. 
In the expression of $x_1^{\pm}(t)$, the extra term $\frac{L}{2\pi}(2\pi\shm t)$ ensures the periodicity of the linear combinations of B-splines which is enforced in the computation.

The domain $[0,2\pi]$ is divided into $M$ uniform subintervals and the corresponding endpoints $(x_1^{\pm}, x_2^{\pm})$ create a discretization grid for shape parametrization. 
We define the free discretization grid points by
\begin{equation}
  \bfps = \Lcb x_{1}^{\pm}\!\Lpar \frac{2\pi j}{M} \Rpar,\ x_{2}^{\pm}\!\Lpar \frac{2\pi j}{M} \Rpar, x_{2}^{\pm}(0)
\Rcb, j = 1,2,\dots, M\shm 1.
\end{equation}
The vector $\bfxi$ is then solved implicitly from equations~\eqref{eq:bspline1} (for given $\bfps$) together with the additional conditions
\begin{equation}\label{eq:bspline2}
x_{1}^{\pm}(0) = L, \ \ x_{1}^{\pm}(2\pi) = 0, \quad
x_{2}^{\pm}(0) = x_{2}^{\pm}(2\pi), \quad
\text{and} \qquad \frac{d^k \bfx^{\pm}}{d t^k}(0) =\frac{d^k \bfx^{\pm}}{d t^k}(2\pi), \;k=1,\dots, (n-1).
\end{equation}

The transformation velocities $\bfth$ are associated to perturbations of $\bfps$. Letting $\bfdel$ be a perturbation vector of the same dimension as $\bfps$ (i.e. with $(4M\shm 2)$ elements), the transformation velocities on both walls in the shape perturbation induced by $\bfdel$ are the limiting values of
%$\eta$-dependent parametrization
\begin{equation}\label{eq:transformation}
\bfth^{\pm}(\bfx^{\pm}(t)) = \frac{1}{\eta} \left(  \bfx^{\pm}\left(t;\bfxi(\bfps + \eta \bfdel)\right) -  \bfx^{\pm}\left(t;\bfxi(\bfps)\right)  \right),
\end{equation}
as $\eta\to 0$. Since the mapping $\bfps\mapsto\bfxi(\bfps)$ is linear, it is unnecessary to actually take the limit in the above formula, and we simply use~\eqref{eq:transformation} with $\eta=1$ in the numerical implementation. In section \ref{sec:validation}, the wall shape perturbations are formulated by perturbing one element in $\bfps$ while keeping the others unchanged, so that $\bfdel$ is a vector with all $0$ except one unit entry.

% ------------------------------------------------------------------------
% ------------------------------------------------------------------------

\subsection{Boundary integral formulation}
\label{sec:forwardsolver}
The shape sensitivities require obtaining the traction and pressure on $\Gamma$ for the forward and associated adjoint problems. The fluid velocity and pressure in all these problems satisfy the Stokes equations with periodic boundary conditions. We follow the periodization scheme developed recently in \cite{marple:16} that uses the free-space Green's functions and enforces the periodic boundary conditions {\em via} an extended linear system approach. Given a source point $\bfy$ and a target point $\bfx$, the free-space Stokes single-layer and double-layer kernels are given by
\begin{equation}
	S_{ij}(\bfx,\bfy)=\frac{1}{4\pi}\left(\delta_{ij}\log\frac{1}{z}+\frac{z_iz_j}{z^2}\right), \quad D_{ij}(\bfx,\bfy)=\frac{1}{\pi}\frac{z_iz_j}{z^2}\frac{\bfz\cdot\bfn^{\bfy}}{z^2},
\end{equation}
where $\bfz:=\bfx-\bfy$, $z:=|\bfz|$. The associated pressure kernels are given by
\begin{equation}
	P^S_j(\bfx,\bfy)=\frac{1}{2\pi}\frac{z_j}{z^2},\quad P^D_j	(\bfx,\bfy)=\inv{\pi}\left(-\frac{n^{\bfy}_j}{z^2}+2\frac{\bfz\cdot\bfn^{\bfy}}{z^2}\frac{z_j}{z^2}\right),
\end{equation}
and the associated traction kernels are given by
\begin{equation}
	\begin{aligned}
		T^S_{ij}(\bfx,\bfy)& = -\frac{1}{\pi}\frac{z_iz_j}{z^2}\frac{\bfz\cdot\bfn^{\bfx}}{z^2},\\
		T^D_{ij}(\bfx,\bfy)& = \inv{\pi}\left[ \left(\frac{\bfn^{\bfy}\cdot\bfn^{\bfx}}{z^2}-8d_{\bfx}d_{\bfy} \right)\frac{z_iz_j}{z^2}+ d_{\bfx}d_{\bfy}\delta_{ij}+\frac{n_i^{\bfx}n_j^{\bfy}}{z^2}+d_{\bfx}\frac{z_jn_i^{\bfy}}{z^2}+d_{\bfy}\frac{z_in_j^{\bfx}}{z^2} \right],
	\end{aligned}
\end{equation}
where for notational convenience we defined the target and source ``dipole functions'' as
\begin{equation}
	d_{\bfx}=d_{\bfx}(\bfx,\bfy):=(\bfz\cdot \bfn^{\bfy})/z^2,\quad d_{\bfy}=d_{\bfy}(\bfx,\bfy):=(\bfz\cdot \bfn^{\bfx})/z^2.
\end{equation}
We employ an indirect integral equation formulation with the following ansatz:
\begin{equation}\label{eq:vel_rep}
	\bfu = \Dcal_{\Gamma}^{\text{near}}\bftau_{\Gamma} + \Scal_{\gamma}^{\text{near}}\bftau_{\gamma} + \sum_{m=1}^K \bfc_m\phi_m,
\end{equation}
where
\begin{equation}\label{eq:ph_rep}
\begin{aligned}
	&	\left(\Dcal_{\Gamma}^{\text{near}}\bftau_{\Gamma}\right)\left(\bfx\right):=\sum_{|n|\leq 1}\iG D(\bfx,\bfy+n\bfd)\bftau_{\Gamma}\left(\bfy\right)\ds_{\bfy}, \\
	&	\left(\Scal_{\gamma}^{\text{near}}\bftau_{\gamma}\right)\left(\bfx\right):=\sum_{|n|\leq 1}\ig S(\bfx,\bfy+n\bfd)\bftau_{\gamma}\left(\bfy\right)\ds_{\bfy}
\end{aligned}
\end{equation}
are sums over free-space kernels living on the walls and particle boundary in the central unit cell and its two near neighbors, and $\bfd$ is the the lattice vector i.e. $\bfd=\bfe_1$. The third term encodes the influence of the ``far'' periodic copies, where  $\phi_m(\bfx)=S(\bfx,\bfy_m)$ and the source locations $\{\bfy_m\}_{m=1}^K$ are chosen to be equispaced on a circle enclosing $\Omega$ \cite{marple:16}.

The unknown coefficients $\{\bfc_m\}_{m=1}^K$ are found by  enforcing the periodic inlet and outlet flow conditions at a set of collocation nodes. The resulting augmented linear system for the forward problem, for example, can be written in the following form in terms of the unknown density functions $\bftau_{\Gamma}$ and $\bftau_{\gamma}$ and the coefficients $\{\bfc_m\}$:
\begin{equation}\label{eq:ABCD_sys}
	\begin{bmatrix}
		A_{\Gamma,\Gamma} & A_{\Gamma,\gamma} & B_{\Gamma,\phi}\\
		A_{\gamma,\Gamma} & A_{\gamma,\gamma} & B_{\gamma,\phi}\\
		C_{\Gamma} & C_{\gamma} & D
	\end{bmatrix}\begin{bmatrix}\bftau_{\Gamma}\\ \bftau_{\gamma} \\ \bfc  \end{bmatrix} = 
	\begin{bmatrix}
		\bfu^D \\ \bfu^{\gamma} \\ \boldsymbol{0}
	\end{bmatrix}	
\end{equation}
The first row applies the slip condition on $\Gamma$ by taking the limiting value of $\bfu(\bfx)$, defined in \eqref{eq:vel_rep}, as $\bfx$ approaches $\Gamma$ from the interior. The second row uses the no slip condition on $\gamma$: $\lim_{\bfx\rightarrow \gamma}\bfu(\bfx)=\bfu^{\gamma}=\bdot{\bfx}{}\Gsup\shp\rho\bfr\cdot \bfx$. Then the centroid velocity $\bdot{\bfx}{}\Gsup$ and angular velocity $\rho$ can be solved for by applying extra force- and torque-free conditions. The third row applies the periodic boundary conditions on velocity and traction. The operators $A$, $B$, $C$, $D$ are correspondingly defined based on the representation formulas \eqref{eq:vel_rep} and \eqref{eq:ph_rep}.

The pointwise pressure and hydrodynamic traction for the ansatz \eqref{eq:vel_rep} are then given by
\begin{align}
	p = \Pcal_{\Gamma}^{D,\text{near}}\bftau_{\Gamma} + \Pcal_{\gamma}^{S,\text{near}}\bftau_{\gamma} + \sum_{m=1}^K \bfc_m P^S(\bfx,\bfy_m),\\
	 \bff = \Tcal_{\Gamma}^{D,\text{near}}\bftau_{\Gamma} + \Tcal_{\gamma}^{S,\text{near}}\bftau_{\gamma} + \sum_{m=1}^K \bfc_m T^S(\bfx,\bfy_m).
\end{align}
The operators in \eqref{eq:ABCD_sys} are discretized by splitting $\Gamma$ and $\gamma$ uniformly into $M_{\Gamma}$ and $M_{\gamma}$ disjoint panels respectively. In each panel, a $p$-th order Gauss-Legendre quadrature is employed to evaluate smooth integrals while a local panel-wise close evaluation scheme of \cite{wu2020solution} is employed to accurately handle corrections for the singularities of $S(\bfx,\bfy)$, $T^D(\bfx,\bfy)$ and $P^D(\bfx,\bfy)$. 
A forward Euler time-stepping scheme is used to evolve the particle position and the solution procedure outlined above is repeated at each time-step. 

In the case of the associated adjoint problems, the solution procedure remains the same but the right hand side of \eqref{eq:ABCD_sys} is modified according to the respective boundary conditions (e.g., (\ref{adjoint:PDE:particle}-d)). In addition, the particle velocities $\dot{\hat{\bfx}}^G$, $\hat{\rho}$ need to be computed by applying the total force and torque conditions, that is,  
given the traction vector $\hat{\bfh}$, the following condition is enforced:
\begin{equation}
	\igt \bff \ds_{\bfy} = \igt \hat{\bfh} \ds_{\bfy}\quad\text{and}\quad \bfe_3\cdot \igt \bfy\shtimes \bff\ds_{\bfy} = \bfe_3\cdot \igt \bfy\shtimes \hat{\bfh}\ds_{\bfy}.
\end{equation}

\section{Numerical Results}
\label{sc:results}

This section presents first validation tests of our boundary integral solvers and shape sensitivity formulas, then results on the shape optimization. In all numerical experiments, the following parameter values were used: $c=1$, $L = 2\pi$, $n=5$ (degree of the B-spline basis functions), and $M=7$. For the augmented Lagrangian optimization algorithm, we set $\zeta^{\star} = 0.01$, $\lambda^0 = (0,0)$, and $\sigma^0 = 10$.

\subsection{Validation of forward and adjoint PDE solvers}

To show the performance of periodic flow solver, we first solve a periodic Stokes flow problem with prescribed slip velocity, and test the convergence of the velocity field as we increase the number of quadrature points on $\Gamma$. We also show temporal convergence on forward and adjoint problem using forward Euler, where we set the axial distance $D=1$, $192$ quadrature points on $\Gamma^{\pm}$, and $60$ quadrature points on $\gamma$. Relative errors are shown below, where $F_0$ is the total force in adjoint problem on the particle at $t=0$, and $s_c$ is particle centroid at $D=1$.
    
\begin{figure}[!h]
   \includegraphics[height=.24\linewidth]{./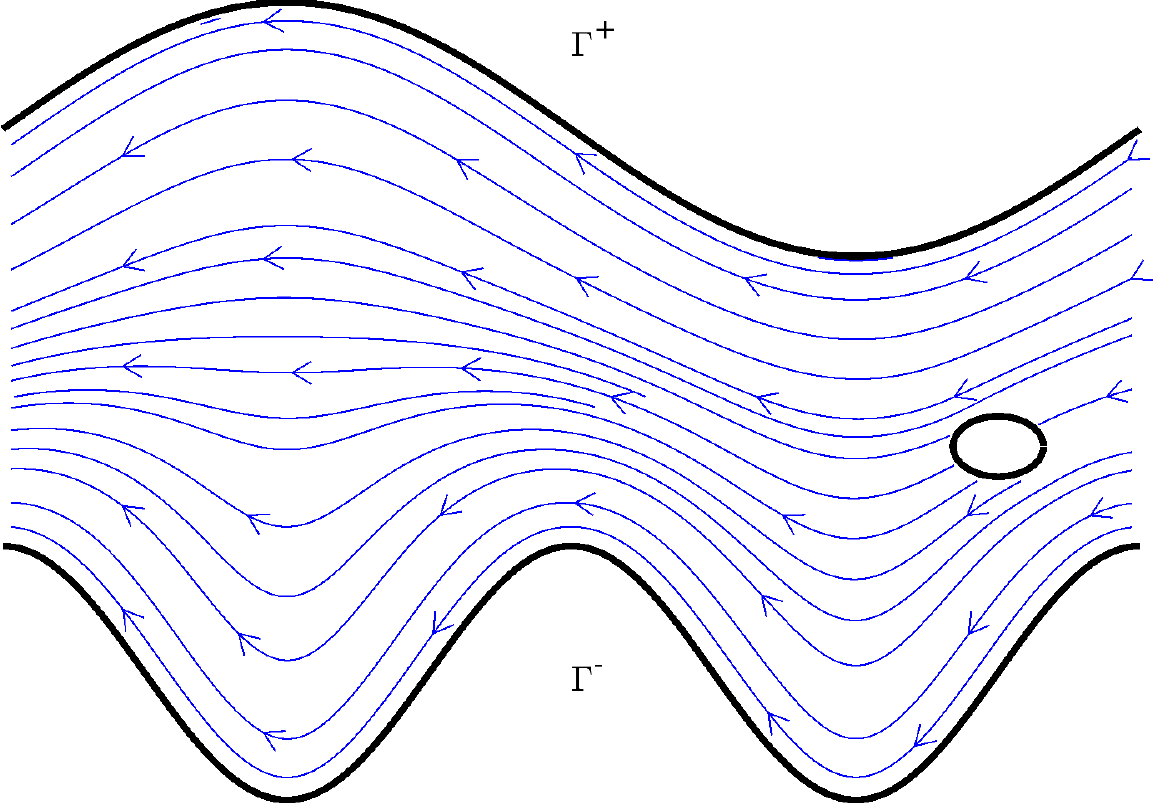}
   \includegraphics[height=.24\linewidth]{./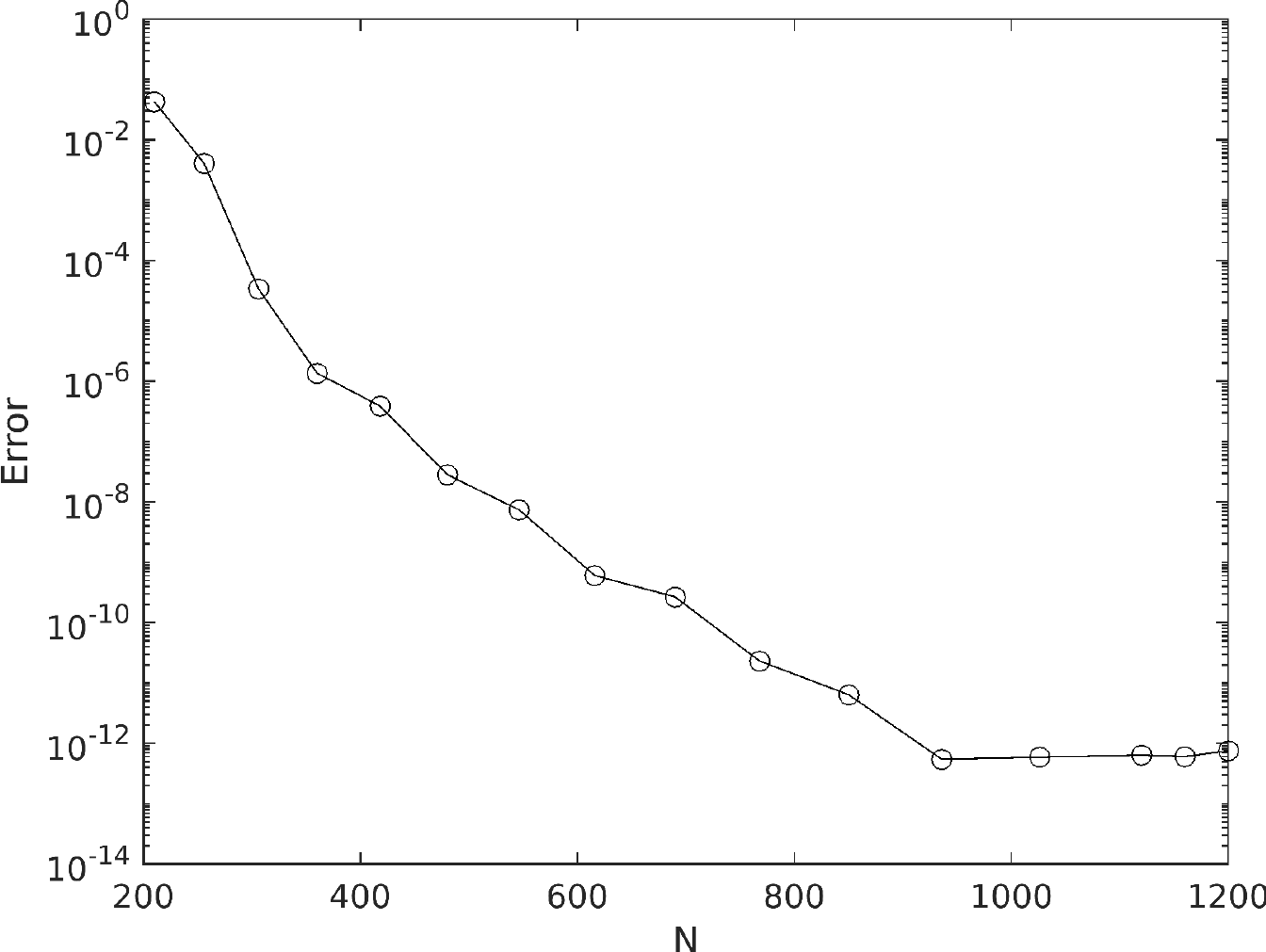}
   \includegraphics[height=.24\linewidth]{./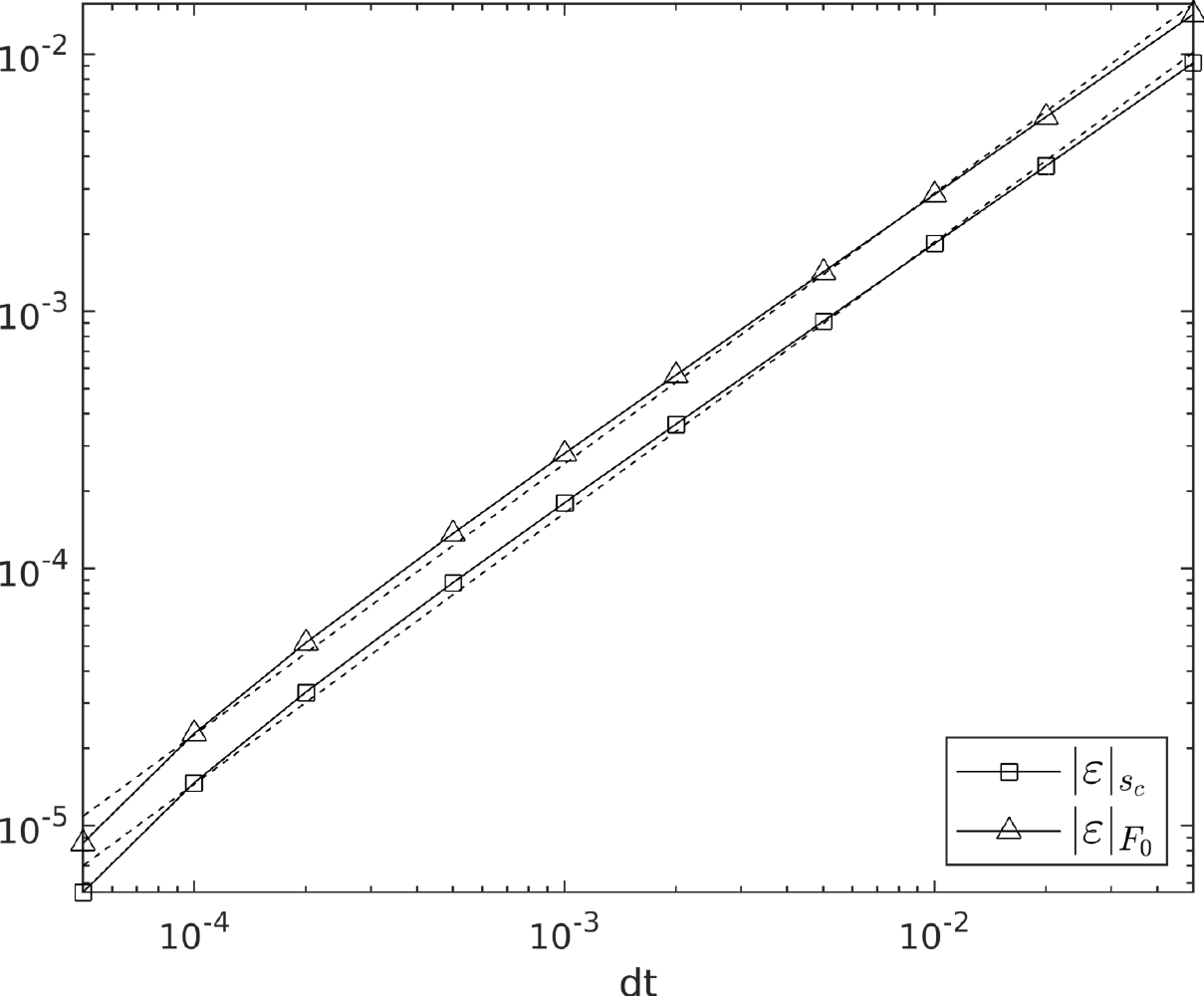}
    \caption[]{\em Validation of the numerical solver. (Left) Streamlines of a periodic Stokes flow induced by prescribed slip on the walls, obtained using our boundary integral solver. (Middle) Plot of self convergence as a function of the spatial resolution $N=pM_{\Gamma}$ used in the forward solver. (Right) Temporal validation. Reference values are computed using $dt=2\times 10^{-5}$.} \label{fig:bie_validation}
\end{figure}

% ------------------------------------------------------------------------

\subsection{Validation of analytical shape sensitivity formulas}\label{sec:validation}

We consider a sinusoidal wall shape and a circular particle shape (Fig.~\ref{fig:validate}a) and compare the shape sensitivities obtained by the finite difference approach and the analytical sensitivity formulas derived in Section \ref{sc:sensitivities}. We use the central difference scheme to approximate the shape derivative:  
\begin{equation}\label{eq:Jprime}
   J'_{\text{FD}} =  J'(\bfx^{\pm};\bfth^{\pm}) = \frac{1}{2\eta} \Lsqb J\lpar \bfx^{\pm}\left(t;\bfxi(\bfps)\right) + 
    \eta \bfth^{\pm} \rpar - J\lpar \bfx^{\pm}\left(t;\bfxi(\bfps)\right) -\eta \bfth^{\pm} \rpar \Rsqb
\end{equation}
with step size $\eta = 10^{-4}$. Here, $J$ is either the energy dissipation functional $J\Wsub$ or the net motion $D$ in the wave frame. Substituting~\eqref{eq:transformation} in~\eqref{eq:Jprime}, we get the following simplified expression, 
\begin{equation}\label{eq:Jprime2}
    J'_{\text{FD}} = J'(\bfx^{\pm};\bfth^{\pm}) = \frac{1}{2\eta} \Lsqb J\lpar \bfx^{\pm}\left(t;\bfxi(\bfps\shp\eta \bfdel)\right) \rpar - J\lpar \bfx^{\pm}\left(t;\bfxi(\bfps\shm\eta \bfdel)\right) \rpar \Rsqb ,
\end{equation}
where $\bfdel$ is a standard basis vector. Depending on the index of the nonzero element in $\bfdel$, there are $(4M\shm2)$ possible shape perturbations. These serve as the basis of any arbitrarily smooth perturbation of the wall shape. A comparison of the shape sensitivities evaluated by the two methods is shown in Fig.~\ref{fig:validate}b, which validates the analytical shape sensitivity formulas using finite difference approach as reference.

\begin{figure}[!h]
    \centering
    \includegraphics[width = \textwidth]{./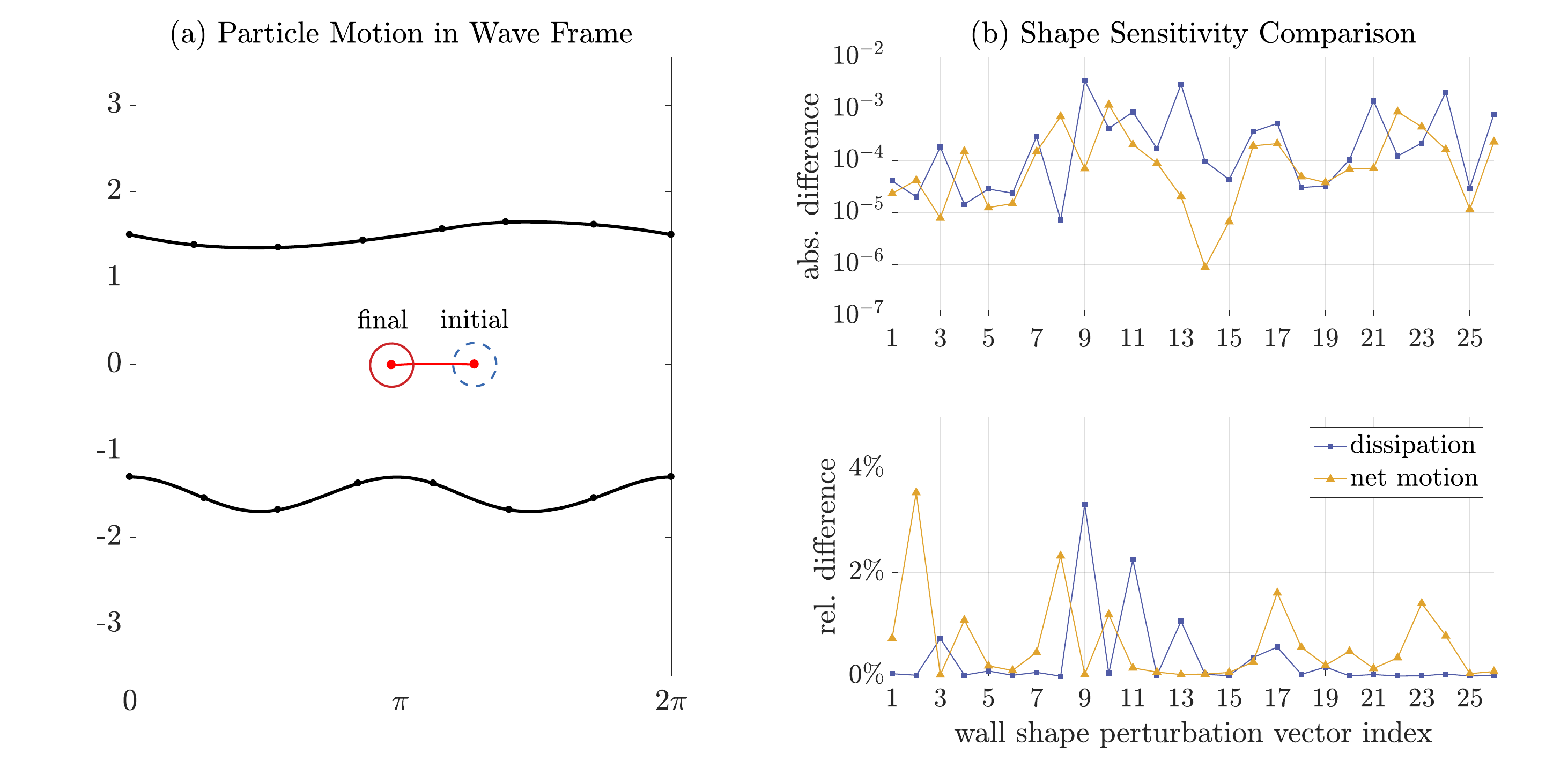}
    \caption{(a) The wall shape and the motion of the particle (centroid) in wave frame. The dots on the wall display the control points. (b) Comparison of the shape sensitivities of dissipation (blue) and net motion in wave frame (red), using analytical and finite difference approaches for the example. The wall shape perturbation vector index is the index of the nonzero element in the perturbation vector $\bfdel$. The absolute difference is $|J'_{\text{analytic}} - J'_{\text{FD}}|$ and the relative difference is $|J'_{\text{analytic}} -J'_{\text{FD}}|/|J'_{\text{FD}}|$.
    }
    \label{fig:validate}
\end{figure}

\subsection{Optimization experiments}\label{sec:optresults}
Here, we present results on the numerical optimization of peristaltic pumps carrying a rigid particle. Figure \ref{fig:motion} shows the optimal wall shapes obtained by our algorithm for different net particle motions with
the same volume of fluid region. As expected, the optimal value of dissipation increases for faster net particle velocity in the fixed frame. In the extreme case where the net velocity of the particle is zero in the fixed frame, as expected, the optimal shape is a flat channel with no dissipation. On the other hand, when the particle moves at the same speed as the peristaltic wave, the centroid of the particle remains fixed in the wave frame.\enlargethispage*{1ex}

\begin{figure}[b]
    \centering
    \includegraphics[width = 0.95\textwidth]{./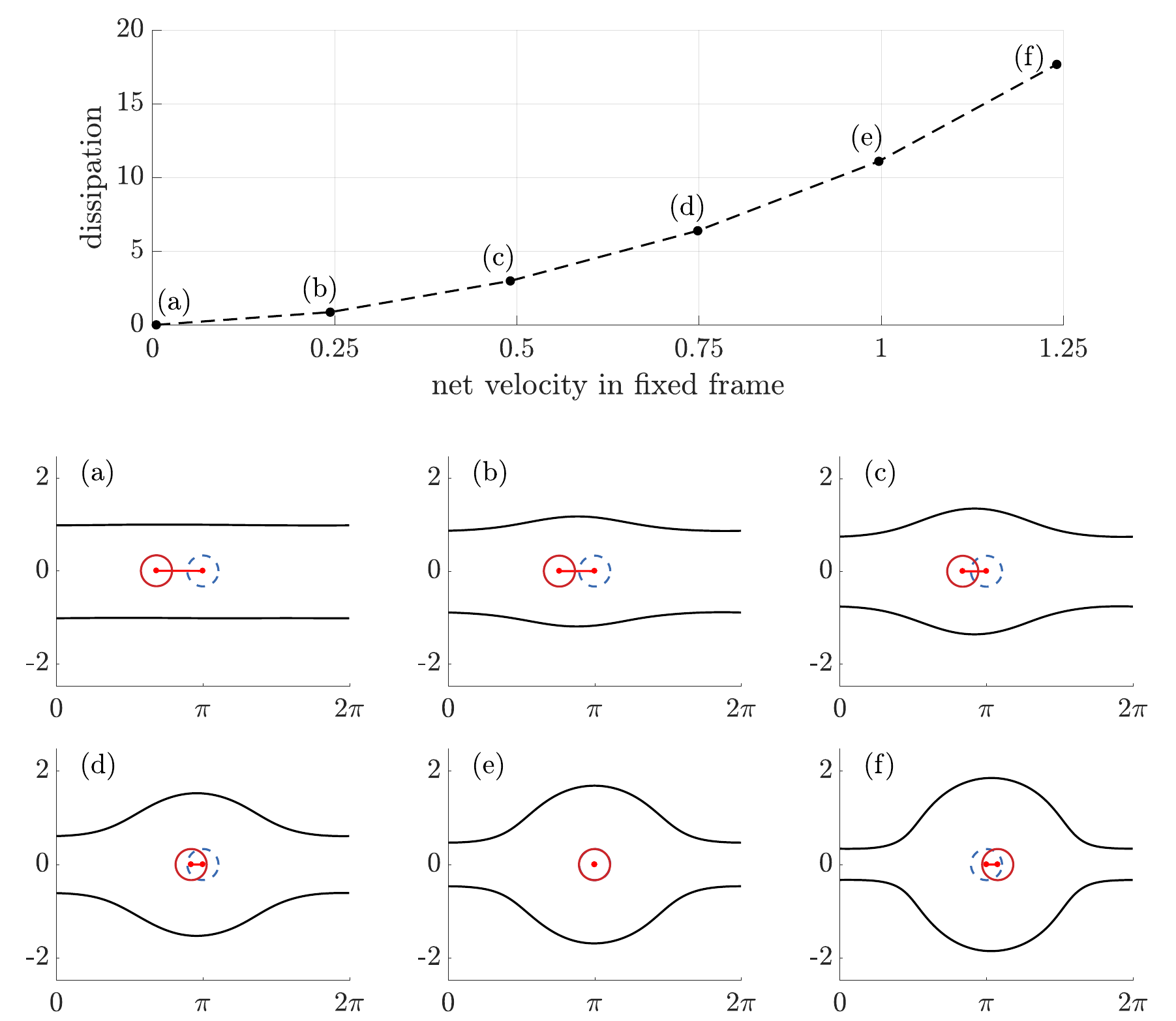}
    \caption{Optimal channel wall shapes (in wave frame) for varying net particle velocity. $V_0=12.26$ and $T = 1$ for all experiments. The particle motions are shown by the initial location (dashed blue), the final location (solid red) and the trajectory of the centroid. Fig. (e) shows a scenario that the particle moves at the same speed as the peristalsis pumping wave speed.}
    \label{fig:motion}
\end{figure}

In Fig.~\ref{fig:iterA}, we plot the wall shapes as the optimization progresses for the case of Fig.~\ref{fig:motion}(e). They evolve from an arbitrary initial channel wall shape to reach a configuration achieving the target volume $|\OO|=V_0$ and net particle motion $D=0$ (i.e., a unit net velocity in the fixed frame). The values of the augmented Lagrangian objective $\Lcal\Asub$, dissipation $J\Wsub$, volume of fluid region $V$ and net particle motion in the wave frame $D$ are shown in Fig.~\ref{fig:iterB}.

\begin{figure}[t]
    \centering
    \includegraphics[width = \textwidth]{./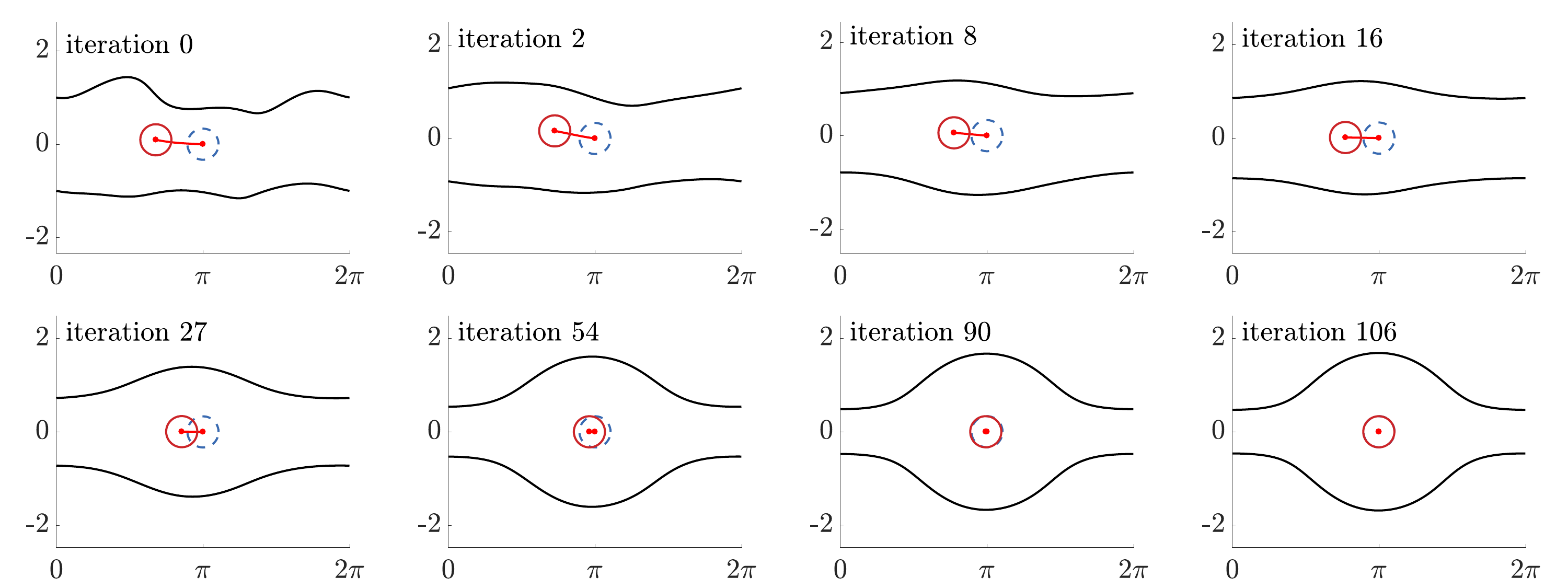}
    \caption{Optimization process of Fig.~\ref{fig:motion}(e) starting from an arbitrary shape. The total number of BFGS iterations is 106 and the total number of evaluations of the problem is 124.}
    \label{fig:iterA}
% \end{figure}

\bigskip

% \begin{figure}[b]
    \centering
    \includegraphics[width = 0.95\textwidth]{./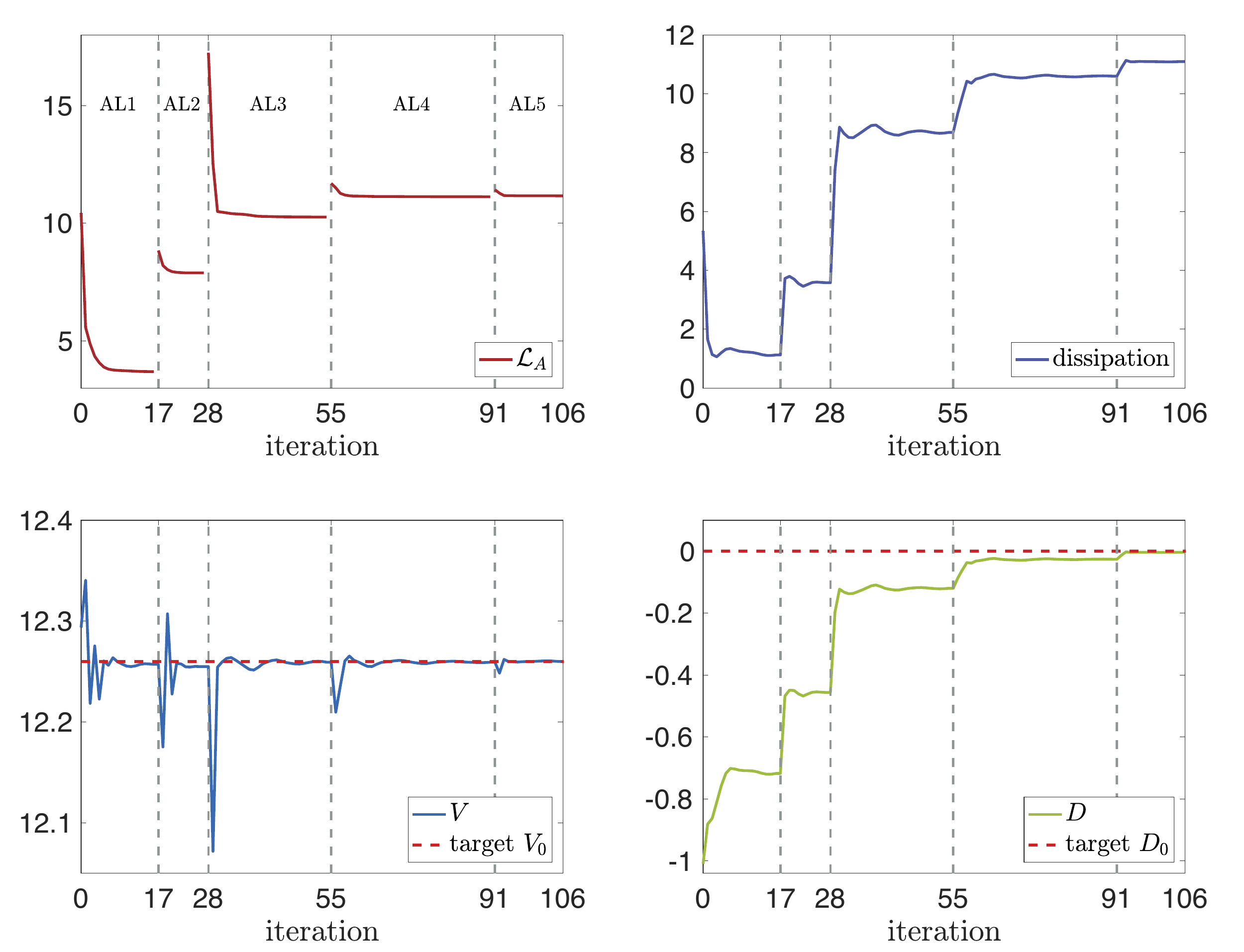}
    \caption{Quantities for the optimization process of Fig.~\ref{fig:motion}(e). The vertical dashed lines label the restart of augmented Lagrangian (AL) after the local minimum is achieved and the penalty parameters or the Lagrangian multipliers is therefore updated.}
    \label{fig:iterB}
\end{figure}

\begin{figure}[t]
    \centering
    \includegraphics[width = \textwidth]{./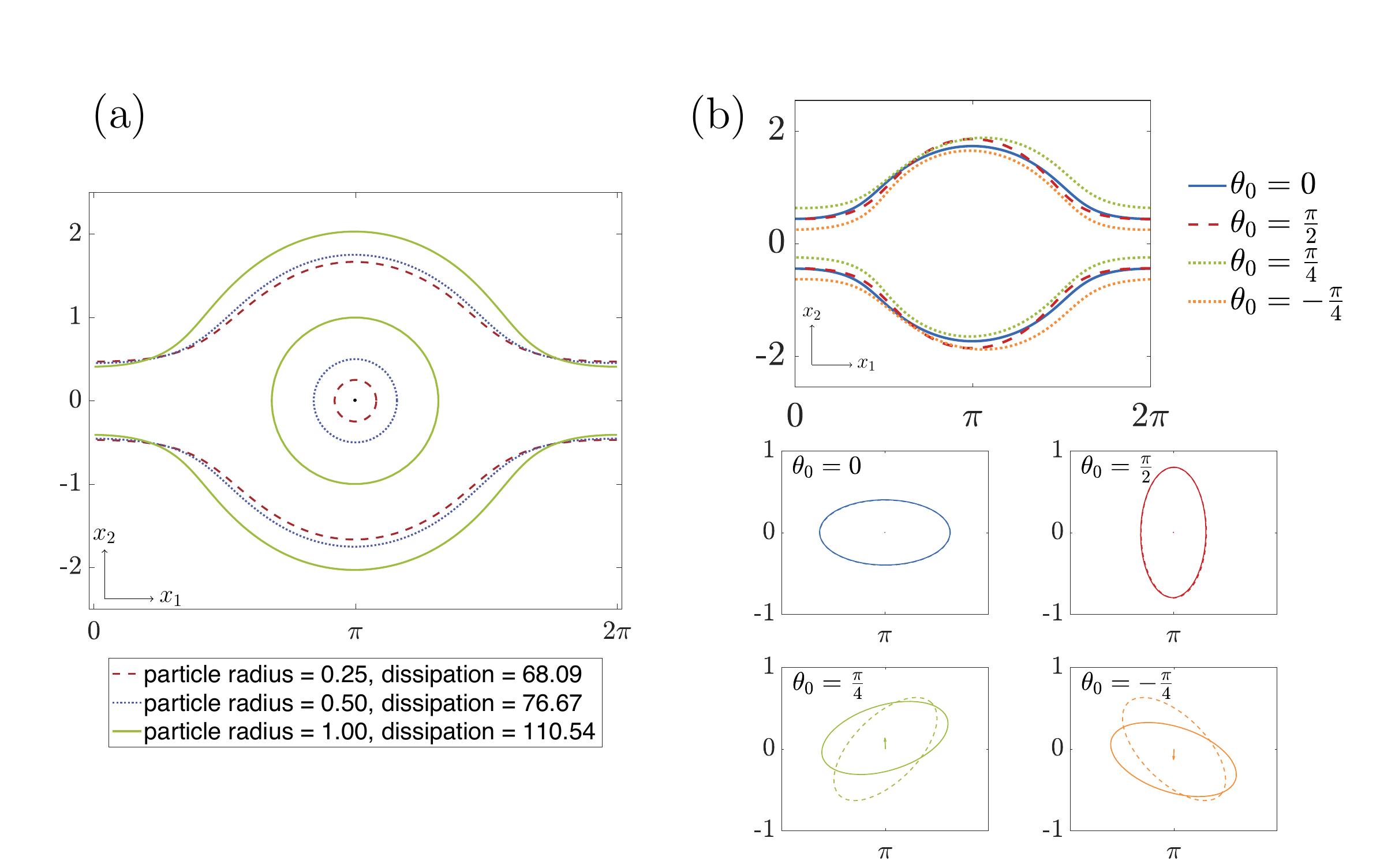}\vspace{-1ex}
    \caption{Optimal wall shapes and particle motions (in wave frame) for varying particle configurations. Here we use $V_0=12.26$, $D_0=0$, $T = 6.3$ and initial particle centroid at $(\pi,0)$ for all cases. (a) For the circle particle, the particle motion in the wave frame is static because in the fixed frame it is moving at the same speed as the wave speed and no vertical translation is observed. (b) For the ellipse particle, the initial tilting angle $\theta_0$ affects the optimal wall shape. The motion of the ellipse particle is shown by initial position (dashed) to final position (solid) corresponding to the color for each $\theta_0$.}
    \label{fig:shape}
\end{figure}

\begin{figure}[t]
    \centering
    \includegraphics[width = \textwidth]{./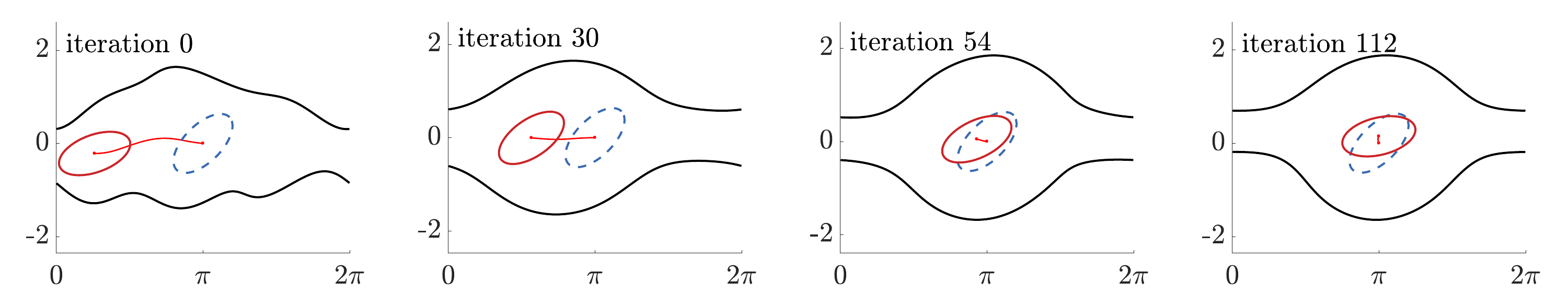}\vspace{-1ex}
    \caption{Optimization process of the ellipse particle with $\theta_0 = \pi/4$ in Fig.~\ref{fig:shape}.  The total number of BFGS iterations is 112 and the total number of evaluations of the problem is 153.}
    \label{fig:iter_ellipse}
\end{figure}

Next, to illustrate the particle effect on the optimal wall shapes, we ran experiments on different shapes and sizes of particles and show the results in Fig.~\ref{fig:shape}. Specifically, we consider circular particles of different size and elliptical particles at different orientations. The initial location of the particle centroid is set to $(\pi,0)$ in all cases. A common feature we find across all the shapes and sizes of particles is that for minimum dissipation, they are carried at the center of fluid domain in the wall frame. Fig.~\ref{fig:iter_ellipse} displays the progression of the optimization starting from an arbitrary pipe shape, where this phenomenon can be observed clearly.\enlargethispage*{1ex}

The results of the numerical experiments show that the optimal pumping wall shapes form an enclosing bolus around the rigid particle near the center line of the channel. Particularly, for a fixed-sized circle particle, a larger target net velocity leads to a bolus of larger size, as seen in Fig.~\ref{fig:motion}. For a fixed net velocity, a larger particle leads to a bigger bolus, see Fig.~\ref{fig:shape}. For the case of an elliptical particle, the initial angle $\theta_0$ between its major axis and the center line affects the symmetry of the optimal channel geometry. For example, setting $\theta_0=\pi/4$, the converged wall shape forms an asymmetric bolus with a more-deformed lower wall, see Fig.~\ref{fig:iter_ellipse}. Comparing the final wall geometry reached for different initial orientations $\theta_0$ of the elliptical particle, shown in Fig.~\ref{fig:shape}(b), the bolus is symmetric about $x_1=0$ and $x_2=\pi$ for $\theta_0 = 0$ or $\pi/2$, with a slightly larger vertical amplitude in the latter case, while the boluses found for $\theta_0 = \pm \pi/4$ are slightly asymmetric. The center line of the channel is shifted upwards for $\theta_0 = \pi/4$ and downwards for $\theta_0 = -\pi/4$. Additionally, for the case $\theta_0 = \pi/4$, the lower wall is still nearly horizontally symmetric about $x_2 = \pi$ but the upper wall is not, its largest amplitude shifting to the right. The walls for $\theta_0 = -\pi/4$ show the opposite trend: the upper wall is nearly symmetric about $x_2 = \pi$ while the lower wall is not symmetric with its largest amplitude shifted to the right.
The initially-tilted elliptical particle ($0\shl\theta_0\shl\pi/2$) undergoes body rotation and vertical translation. For example, the particle motions for $\theta_0 = \pm \pi/4$ experience opposite rotations (respectively clockwise and counterclockwise) and translations (respectively upwards and downwards).

\section{Conclusions} \label{sc:conclusions}
We presented a gradient-based optimization approach for finding the optimal shapes of peristaltic pumps for transporting rigid particles in Stokes flow. While we considered the power loss functional and associated constraints, the procedure for deriving shape sensitivities generalizes to other related objective functions and constraints. An important contribution of this work is an adjoint formulation that, in conjunction with a boundary integral formulation, significantly reduces the computational burden of evaluating shape derivatives in the case of particulate flows. 

Although we restricted our attention to peristaltic pumps, the computational framework developed here is applicable to a wide range of design and optimization problems in interfacial fluid mechanics. For example, we recently applied similar techniques to optimize the swimming action of axisymmetric microswimmers \cite{guo2021optimal, guo2021optimal2}.  Extensions to time-dependent problems such as deformable microswimmers (e.g., cells driven by membrane deformations \cite{farutin2013amoeboid}) or active flows in complex geometries \cite{bechinger2016active} can benefit from the adjoint formulation developed here.

% ------------------------------------------------------------------------
\subsection*{Acknowledgements}
RL, SV and HZ acknowledge support from NSF under
grants DMS-1454010 and DMS-2012424. The work of SV was also supported by the Flatiron Institute (USA), a division of Simons Foundation, and by the  Fondation Math\'{e}matique Jacques Hadamard (France).

% ------------------------------------------------------------------------
% BibTeX
\bibliographystyle{siamplain}      % mathematics and physical sciences
\bibliography{ref}   % name your BibTeX data base
% ------------------------------------------------------------------------

\appendix%\small
\section{Proofs}

\subsection{Differential operators on curved boundaries}\label{grad:curv}

In preparation for some of the proofs to follow, we list useful formulas and notations regarding differential operators evaluated on curved boundaries of a fluid domain.
Let points $\bfx$ in a tubular neighborhood $V$ of $\G$ be represented as\enlargethispage*{1ex}
\begin{equation}
  \bfx = \bfx(s) + z\bfn(s)
\end{equation}
in terms of curvilinear coordinates $(s,z)$, and let $\bfv(\bfx)=v_s(s,z)\bftau(s)\shp v_n(s,z)\bfn(s)$ denote a generic vector field in $V$. Then, at any point $\bfx\sheq\bfx(s)$ of $\G$, we have
\begin{equation}
  \bfna\bfv = \del{s}\bfv \tensor\bftau + \del{n} v_s\bftau\tensor\bfn + \del{n} v_n\bfn\tensor\bfn, \qquad
  \Div\bfv  = \bftau\sip\del{s}\bfv + \del{n} v_n \label{grad:div:expr}.
\end{equation}
Assuming incompressibility, the condition $\Div\bfv\sheq0$ can be used to eliminate $\del{n} v_n$, yielding the following expressions of $\bfna\bfv$ and $2\bfD[\bfv]=\bfna\bfv+\bfna\bfv\Tsup$:
\begin{align}
  \bfna\bfv
 &= \del{s}\bfv \tensor\bftau + \del{n} v_s\bftau\tensor\bfn
  - \lpar \bftau\sip\del{s}\bfv \rpar\bfn\tensor\bfn \\
 &= \lpar \bftau\sip\del{s}\bfv \rpar \lpar \bftau\tensor\bftau \shm \bfn\tensor\bfn \rpar + \lpar \bfn\sip\del{s}\bfv \rpar\bfn\tensor\bftau + \del{n} v_s\bftau\tensor\bfn, \label{D[u]:inc} \\
  2\bfD[\bfv]
 &= 2\lpar \bftau\sip\del{s}\bfv \rpar \lpar \bftau\tensor\bftau \shm \bfn\tensor\bfn \rpar
 + \lpar \bfn\sip\del{s}\bfv \shp \del{n} v_s \rpar \lpar \bfn\tensor\bftau \shp \bftau\tensor\bfn \rpar
\end{align}
Next, we evaluate the stress vector $\bff=-p\bfn+2\bfD[\bfv]\sip\bfn$, to obtain
\begin{equation}
  \bff = f_s\bftau + f_n\bfn \qquad\text{with} \quad
  f_s = \bfn\sip\del{s}\bfv \shp \del{n} v_s , \quad f_n = -p -2(\bftau\sip\del{s}\bfv).
\end{equation}
In particular, we therefore have $\del{n} v_s = f_s\shm \bfn\sip\del{s}\bfv$, allowing (by eliminating the remaining normal derivative therein) to express $\bfna\bfv$ and $\bfD[\bfv]$ in terms of quantities defined on the boundary:
\begin{equation}
\begin{aligned}
  \bfna\bfv
 &= \lpar \bftau\sip\del{s}\bfv \rpar \lpar \bftau\tensor\bftau \shm \bfn\tensor\bfn \rpar + \lpar \bfn\sip\del{s}\bfv \rpar \lpar \bfn\tensor\bftau \shm \bftau\tensor\bfn \rpar  + f_s\bftau\tensor\bfn, \\
  2\bfD[\bfv]
 &= 2\lpar \bftau\sip\del{s}\bfv \rpar \lpar \bftau\tensor\bftau \shm \bfn\tensor\bfn \rpar + f_s \lpar \bfn\tensor\bftau \shp \bftau\tensor\bfn \rpar
\end{aligned}\label{grad:D}
\end{equation}

\subsubsection*{Velocity with rigid-body boundary traces} In this case, we consider vector fields $\bfu$ satisfying $\bfu=\bfu_0 \shp\varrho\bfr\sip\bfx$ on the particle boundary $\gamma$, see~\eqref{rigid:velocity}. This implies
\begin{equation}
  \del{s}\bfu = \varrho\bfr\sip\del{s}\bfx = \varrho\bfr\sip\bftau = \varrho\bfn \qquad \text{on $\gamma$}
\end{equation}
(since $\bfr=\bfe_2\tensor\bfe_1\shm\bfe_1\tensor\bfe_2=\bfn\tensor\bftau\shm\bftau\tensor\bfn$ with the orientation convention of Fig.~\ref{geom:2D}), so that~\eqref{D[u]:inc} implies
\begin{equation}
  \bfna\bfu = \varrho\bfn\tensor\bftau + \del{n}\bfu_s\bftau\tensor\bfn, \quad
  2\bfD[\bfu] = \lpar \del{n}\bfu_s \shp \varrho \rpar\,\lpar \bfn\tensor\bftau \shp \bftau\tensor\bfn \rpar,
  \quad \DivS\bfu=0. \label{Dv:aux:gamma}
\end{equation}
on $\gamma$. We now evaluate the stress vector $\bfh:=\bfsig[\bfu,p]\sip\bfn=-p\bfn+2\bfD[\bfu]\sip\bfn$, to obtain
\begin{equation}
  \bfh = -p\bfn + \lpar \del{n}\bfu_s \shp \varrho \rpar\bftau = -p\bfn + h_s\bftau \qquad \text{and hence \ }
  \del{n}\bfu_s = h_s\shm\varrho.
\end{equation}
Using the above formula for $\del{n}\bfv_s$ in~\eqref{Dv:aux:gamma}, we finally obtain
\begin{equation}
  \text{(a) \ }\bfna\bfu = \varrho\bfr + h_s\bftau\tensor\bfn, \qquad
  \text{(b) \ }2\bfD[\bfu] = h_s\lpar \bfn\tensor\bftau \shp \bftau\tensor\bfn \rpar. \label{Dv:gamma}
\end{equation}

\subsection{Proof of Lemma~\ref{DivS:theta}}\label{DivS:theta:proof} The lemma follows directly from using~(\ref{Dv:gamma}a) in~\eqref{DivS:def}.

\subsection{Proof of formula (\ref{lm4})}\label{app}

We use the Frenet formulas~\eqref{frenet} and associated conventions. To evaluate $\dd{\bfu}{}\Dsup$, we let $\G$ depend on the fictitious time $\eta$, setting
\begin{equation}
\G_{\eta}\ni\bfx_{\eta}(s) = \bfx(s)+\eta\bfth(s) \qquad(0\shleq s\shleq \ell), \label{wall:shape:eta:2D}
\end{equation}
(where $\G$ stands for $\G^+$ or $\G^-$, and likewise for $\ell$) and seek the relevant derivatives w.r.t. $\eta$ at $\eta=0$. Note that for $\eta\not=0$, $s$ is no longer the arclength coordinate along $\G_{\eta}$, and $\del{s}\bfx_{\eta}$ is no longer of unit norm; moreover, the length of $\G_{\eta}$ depends on $\eta$. The wall velocity $\bfU=\ell\bftau$ for varying $\eta$ is then given by
\begin{equation}
  \bfU_{\eta}(s) = (\ell_{\eta}/g_{\eta})\, \del{s}\bfx_{\eta}  \qquad(0\shleq s\shleq \ell), \label{U:def:eta:2D}
\end{equation}
having set $g_{\eta}=|\del{s}\bfx_{\eta}|$ (note that $g_0=1$). Our task is to evaluate $\text{d}/\text{d}\eta\,\bfU_{\eta}(s)$ at $\eta=0$. We begin by observing that the derivative of $g$ is (since $\del{s}\bfx_{\eta}=\bftau$ and $g=1$ for $\eta=0$)
\begin{equation}
  \del{\eta}g
   = (\del{s}\bfx_{\eta}\sip\del{\eta s}\bfx_{\eta})/g = \bftau\sip\del{s}\bfth
   = \del{s}\theta_s - \kappa\theta_n
\end{equation}
and the length $\ell_{\eta}$ of $\G_{\eta}$ and its derivative $\dd{\ell}$ are given (noting that $s$ spans the fixed interval $[0,\ell]$ for all curves $\G_{\eta}$) by
\begin{equation}
  \text{(i) \ } \ell_{\eta} = \int_{0}^{\ell} g_{\eta} \ds, \qquad
  \text{(ii) \ } \dd{\ell} = \int_{0}^{\ell} (\del{s}\theta_s - \kappa\theta_n) \ds
 = -\int_{0}^{\ell} \kappa\theta_n \ds.
\label{dd:ell:2D}
\end{equation}
The last equality in (ii), which results from the assumed periodicity of $\bfth$, is item (b) of~\eqref{lm4}. Then, using the above formulas in~\eqref{U:def:eta:2D} establishes item (a) of~\eqref{lm4}, as we find
\begin{equation}
 \dd{\bfu}{}\Dsup = \del{\eta}\bfU_{\eta}(s)\big|_{\eta=0}
 = \lsqb \dd{\ell} - \ell(\del{s}\theta_s - \kappa\theta_n) \rsqb\bftau + \ell\del{s}\bfth
 = \dd{\ell}\bftau + \ell(\del{s}\theta_n + \kappa\theta_s)\bfn. \label{dd:uD:2D}
\end{equation}

\subsection{Proof of Lemma~\ref{lm5}}\label{lm5:proof}

Firstly, it is straightforward (e.g. using component notation) to show that $\Div\bfE\Tsup\lpar(\bfu,p),(\bfu,p)\rpar = \bfze$, i.e. $\del{j}E_{ji} = 0$ $(i\sheq1,2)$ holds for any $(\bfu,p)$ satisfying $\Div\bfu\sheq0$ and $-\Delta\bfu\shp\bfna p\sheq\bfze$. For any vector field $\bfzet\in C^{1,\infty}_0(\OO_{\text{all}})$, we consequently have
\begin{align}
  \bfE\lpar(\bfu,p),(\bfu,p)\rpar\dip\bfna\Tsup\bfzet
 &= \Div\lsqb\bfE\lpar(\bfu,p),(\bfu,p)\rpar\sip\bfzet\rsqb - \lsqb\Div\bfE\Tsup\lpar(\bfu,p),(\bfu,p)\rpar\rsqb\sip\bfzet \\
 &= \Div\lsqb\bfE\lpar(\bfu,p),(\bfu,p)\rpar\sip\bfzet\rsqb. \label{E:div}
\end{align}
Then, observing that $\lpar(\bfu,p),(\bfv,q)\rpar\mapsto\bfE\lpar(\bfu,p),(\bfv,q)\rpar$ defines a symmetric bilinear form, we invoke the polarization identity and obtain
\begin{align}
  \bfE\lpar(\bfu,p),(\bfuh,\hatp)\rpar\dip\bfna\Tsup\bfzet
 &= \pinv{4} \lsqb \bfE\lpar(\bfu\shp\bfuh,p\shp\hatp),\, (\bfu\shp\bfuh,p\shp\hatp) \rpar
 + \bfE\lpar (\bfu\shm\bfuh,p\shm\hatp),\, (\bfu\shm\bfuh,p\shm\hatp)\rpar\rsqb \dip\bfna\bfzet \\
 &= \pinv{4} \Div\lsqb \bfE\lpar(\bfu\shp\bfuh,p\shp\hatp),\, (\bfu\shp\bfuh,p\shp\hatp) \rpar\sip\bfzet
 + \bfE\lpar (\bfu\shm\bfuh,p\shm\hatp),\, (\bfu\shm\bfuh,p\shm\hatp)\rpar\sip\bfzet \rsqb \\
 &= \Div\lpar\bfE\lpar(\bfu,p),(\bfuh,\hatp)\rpar\sip\bfzet\rpar.
\end{align}
Hence, applying the first Green identity (divergence theorem) yields
\begin{equation}
  \lbra\bfE\lpar(\bfu,p),(\bfuh,\hatp)\rpar,\bfna\Tsup\!\bfzet\rbra_{\OO}
 = \int_{\G\cup\gamma\cup\G_0\cup\G_L} \bfn\sip\bfE\lpar(\bfu,p),(\bfuh,\hatp)\rpar\sip\bfzet \ds.
\end{equation}
Finally, condition (ii) in~\eqref{Theta:def} and the assumed periodicity conditions at the end sections for the velocity fields (which imply the same periodicity for $\bfna\bfu$ and $\bfna\bfuh$ by the known interior regularity of $\bfu,\bfuh$ in the whole channel), for $p$ (but not necessarily for $\hatp$) as well as for $\bfzet$ give
\begin{equation}
  \int_{\G_0\cup\G_L} \bfn\sip\bfE\lpar(\bfu,p),(\bfuh,\hatp)\rpar\sip\bfzet \ds
 = \int_{\G_L} \Delta\hatp (\del{2} u_1) \zeta_2 \ds,
\end{equation}
which completes the proof of the claimed integral identity.

Then, if $\bfu$ and $\bfuh$ are rigid-body velocity fields on $\gamma$, \eqref{Dv:gamma} provides $2\bfD[\bfu]\dip\bfD[\bfuh] = h_s\hath_s$, $\bfh\sip\bfna\bfuh = \ooh\bfh\sip\bfr + h_s\hath_s\bfn$ and $\bfhh\sip\bfna\bfu = \varrho\bfhh\sip\bfr + h_s\hath_s\bfn$, from which formula~\eqref{E:gamma} readily follows.

\subsection{Proof of equation~\eqref{forward:wall}}
\label{proof:lemma5}

When applied to the solution of the forward problem~\eqref{forward:weak}, which satisfies $\bfu=(c\ell/L)\bftau$ on $\G$, formulas~\eqref{grad:D} yield
\begin{equation}
  \bfna\bfu
 = \kappa\ell \lpar \bfn\tensor\bftau\shm\bftau\tensor\bfn \rpar + f_s\bftau\tensor\bfn, \quad
  2\bfD[\bfu] = f_s \lpar \bfn\tensor\bftau \shp \bftau\tensor\bfn \rpar, \quad
  f_n = -p \qquad\text{on $\G$} \label{Du:expr}
\end{equation}
(in particular, the viscous part of $\bff$ is tangential to $\G$). Moreover, we also readily obtain
\begin{equation}
  2\bfD[\bfu]\dip\bfD[\bfuh] = f_s\fhat_s, \qquad
  \bff\sip\bfna\bfuh\sip\bfth
 = \lpar\bff\sip\del{s}\bfuh\rpar\theta_s
 + \Lpar \lpar p\bftau\shm f_s\bfn \rpar\sip\del{s}\bfuh + f_s\fhat_s \Rpar \theta_n, \label{DuDu}
\end{equation}
and using the above results and~\eqref{lm4} provides
\begin{align}
\lefteqn{
   \bffh\sip\dd{\bfu}{}\Dsup + \bfn\sip\bfE\lpar(\bfu,p),(\bfuh,\hatp)\rpar\sip\bfth } & \suite
  = \dd{\ell}\fhat_s - \ell(\del{s}\theta_n)(\hatp_n\shp2\bftau\sip\del{s}\bfuh) - \lpar\bff\sip\del{s}\bfuh\rpar\theta_s
   + \Lpar \lpar f_s\bfn \shm p\bftau \rpar\sip\del{s}\bfuh + \kappa\ell \fhat_s - f_s\fhat_s \Rpar\theta_n
\end{align}
We next observe that, for given $\G$, $F=F(\bff,s)$ (the explicit dependence in $s$ stemming from the dependence in $\G$ of $F$) and derive
\begin{align}
  \lsqb F \shm \del{\bff}F \sip \bff \rsqb\,\DivS\bfth
 &= \lsqb F \shm \del{\bff}F \sip \bff \rsqb\del{s}\theta_s
 -\kappa\lsqb F \shm \del{\bff}F \sip \bff \rsqb\,\theta_n \\
 &= \text{d}_s\lpar \lsqb F \shm \del{\bff}F \sip \bff \rsqb\theta_s \rpar
 - \lpar\del{\bff}F\sip\del{s}\bff \shp \del{s} F \rpar\theta_s \suite
 + \lpar\del{\bff}F\sip\del{s}\bff\rpar\theta_s
 + \bff\sip\del{s}\lpar\del{\bff}F\rpar\theta_s
 - \kappa\lsqb F \shm \del{\bff}F \sip \bff \rsqb\,\theta_n \\
 &= \text{d}_s\lpar \lsqb F \shm \del{\bff}F \sip \bff \rsqb\theta_s \rpar
 - (\del{s} F)\theta_s + \lpar\bff\sip\del{s}\bfuh\rpar\theta_s
 - \kappa\lsqb F \shm \del{\bff}F \sip \bff \rsqb\,\theta_n
\end{align}
having recalled that $\bfuh\sheq\del{\bff}F$ on $\G$, used formula~\eqref{DivS:theta} for $\DivS\bfth$ and noticed that $\del{s} F\sheq\del{\bff}F\sip\del{s}\bff$ and $\text{d}_s F\sheq \del{\bff}F\sip\del{s}\bff + \del{s} F$. Finally, summing the last two equalities and rearranging terms yields Equation~\eqref{forward:wall}.
\end{document}